\theoremstyle{plain}
\newtheorem{thm}{Theorem}[section]
\newtheorem{cor}[thm]{Corollary}
\newtheorem{lemma}[thm]{Lemma}
\newtheorem{prop}[thm]{Proposition}
\theoremstyle{definition}
\newtheorem{definition}[thm]{Definition}
\newtheorem{remark}[thm]{Remark}
\newtheorem{ex}[thm]{Example}
\theoremstyle{conjecture}
\def\B{\mathcal{B}}
\def\C{\mathcal{C}}
\def\D{\mathcal{D}}
\def\P{\mathcal{P}}
\def\sfC{\mathsf{C}}
\def\sfH{\mathsf{H}}
\def\sfT{\mathsf{T}}
\def\Hom{\operatorname{Hom}}
\def\Ext{\operatorname{Ext}}
\def\add{\operatorname{add}}
\def\ind{\operatorname{ind}}
\def\mod{\operatorname{mod}}
\begin{document}

\title[Hom-configurations and spherical objects]{Hom-configurations in triangulated categories generated by spherical objects}

\author{Raquel Coelho Sim\~oes}
\address{Centro de Estruturas Lineares e Combinat\'orias, Complexo Interdisciplinar
da Universidade de Lisboa, Av. Prof. Gama Pinto 2, 1649-003 Lisboa, Portugal.}
\email{rcoelhosimoes@campus.ul.pt}
\date{\today}

\begin{abstract}
Let $\sfT_w$ be a triangulated category generated by a $w$-spherical object, with $w \leq -1$. It is known that these categories are $w$-Calabi-Yau \cite{HJY}. Another instance of a triangulated category with negative Calabi-Yau dimension is the orbit category $\C_m (Q) := \D^b(Q) / \tau \Sigma^{m+1}$, where $Q$ is a (Dynkin) quiver, $\tau$ the AR-translate, $\Sigma$ the shift functor and $m \in \mathbb{N}$. We give a generalisation of the definition of Hom-configurations and Riedtmann configurations, which were studied in stable module categories of selfinjective algebras \cite{Riedtmann} and in $\C_1(Q)$ \cite{BRT, CS1}, and we classify these configurations in $\sfT_w$ in terms of arcs in the $\infty$-gon. We will also study a connection between these configurations  and the widely studied combinatorial objects known as noncrossing partitions. In addition, we prove that $\C_{|w|} (Q)$, where $Q$ is of type $A$, is a certain \textit{perpendicular} subcategory of $\sfT_w$, and use this fact to create a geometric model for these orbit categories. 
\end{abstract}

\keywords{Auslander-Reiten theory, d-Calabi-Yau categories, spherical objects, triangulated categories, Hom-configurations, Riedtmann configurations, noncrossing partititons, perpendicular categories, cluster combinatorics.}

\subjclass[2010]{Primary: 05E10, 16G20, 16G70, 18E30; Secondary: 05C10}

\maketitle

\bibliographystyle{plain}

\section{Introduction}

Calabi-Yau (CY, for short) triangulated categories appear in many branches of mathematics and physics, such as conformal field theory and string theory in theoretical physics, homological mirror symmetry in algebraic and symplectic geometry, and cluster-tilting theory in representation theory. Much work has been done on understanding triangulated categories of positive CY dimension, particularly those which are 2-CY or 3-CY. Thus far, little is understood about triangulated categories of negative CY dimension. In fact, there does not seem to exist a universally accepted definition of this concept yet. 

In this paper we will consider two classes of triangulated categories which appear to have negative CY dimension: triangulated categories $\sfT_w$ generated by a $w$-spherical object, and orbit categories $\C_{|w|} (Q) := \D^b(Q) / \tau \Sigma^{|w|+1}$, where $w$ is a negative integer, $Q$ is a Dynkin quiver, $\tau$ is the Auslander-Reiten translate and $\Sigma$ is the shift functor of the bounded derived category $\D^b(Q)$. 

The first class of categories, which are actually defined for any integer $w$, have recently been subject of intensive study (see \cite{FY}, \cite{HJ1}, \cite{HJY}, \cite{J}, \cite{KYZ}, \cite{Ng} and \cite{ZZ}), especially in connection with cluster-tilting theory, for $w \geq 2$. When $w \leq 0$, it is suggested in \cite{HJY} that these categories provide an example of categories with negative CY dimension, in the sense that $\Sigma^w$ is the \textit{only} power of the shift functor which is a Serre functor. The orbit category $\C_1(Q)$ was studied in \cite{CS1} (see also \cite{BRT}) and \cite{CS2}, also in connection with cluster-tilting theory and with the work of Riedtmann on the classification of selfinjective algebras.  



One reason that one should allow the CY dimension to be negative can be explained by the following. Consider the category $\C_1(A_n)$, which can be viewed as the stable module category of a Nakayama algebra (for more details, see \cite{CS2}). Taking the CY dimension to be the smallest \textit{positive} integer $d$ such that $\Sigma^d$ is the Serre functor, Dugas computes the CY dimension of $\C_1(A_n)$ to be $2n-1$  \cite[Theorem 6.1]{Dugas}.

However, taking this definition of CY dimension has several drawbacks. Most na\"ively, the CY dimension across the whole family, while given by a simple formula, is not uniform. This contrasts with the case for the cluster category $\D^b(Q) / \tau^{-1} \Sigma$, which is always $2$-CY, independently of the choice of quiver $Q$. A more serious drawback is the following: the natural objects of study in a $(2n-1)$-CY category are the $(2n-1)$-cluster-tilting objects. However, the properties of these objects in $\C_1(A_n)$ is not very interesting: they are indecomposable, and not every indecomposable object is a $(2n-1)$-cluster-tilting object. Moreover, since they are indecomposable objects, there is no natural theory of mutation. 

This drawback seems to be a consequence of, informally speaking, the CY dimension being much larger than the \textit{size} of the category. It turns out that both these shortcomings can be rectified by allowing the possibility of negative CY dimension. 

On the one hand, $\Sigma^w$ is a Serre functor of $\C_{|w|} (Q)$, independently of the choice of $Q$, and on the other hand, the behaviour of Hom-configurations and Riedtmann configurations in $\C_1 (Q)$, which are maximal/generating collections of indecomposable objects satisfying Hom-vanishing conditions for pairwise distinct objects, is highly reminiscent of that of cluster-tilting objects in the cluster category. For example, the number of pairwise non-isomorphic indecomposables in a Hom-configuration is precisely the number of vertices of $Q$, as is the case for cluster-tilting objects (which may be thought of as $\Ext^1$-configurations). For more details see \cite{BRT} and \cite{CS2}.

For this reason, we propose to study a more general version of Hom-configurations and Riedtmann configurations in $\sfT_w$, which will be called $|w|-\Hom$-configurations and $|w|$-Riedtmann configurations. We want to convey to the reader that these are the \textit{right} objects to study in the negative CY dimension world, by showing the nice combinatorics that these objects present in these categories. We will give a combinatorial classification of these two classes of objects, using the geometric model of $\sfT_w$ in terms of arcs in the $\infty$-gon \cite{HJ2}. In particular, we will see that, unlike in the orbit category $\C_1 (Q)$, where $Q$ is a Dynkin quiver, these two classes of objects do not coincide. 
%

We will also prove that there is a strong connection between the two classes of categories studied in this paper, by showing that a certain \textit{perpendicular} subcategory of $\sfT_w$ is equivalent to the disjoint union of $\sfT_w$ itself and orbit category $\C_{|w|} (A_n)$. As an application of this fact, we will deduce a nice combinatorial model for the latter category, and consequently a new combinatorial description of the corresponding $|w|-$Hom and Riedtmann configurations, in terms of certain noncrossing pair partitions. We will compare this result with the already known bijection between Hom-configurations in $\C_1(A_n)$, and classical noncrossing partitions (see \cite{Riedtmann} and also \cite{CS2}). 

\section{Definitions and basic properties}

Let $w$ be a negative integer and $K$ be an algebraically closed field. Throughout this paper, $\sfT_w$ (or simply $\sfT$) will be a $K$-linear idempotent complete algebraic triangulated category which is classically generated by a $w$-spherical object, that is, an object $s$ satisfying
\[
\dim_K \, \Hom_{\sfT} (s, \Sigma^i s) = 
\begin{cases}
1 & \mbox{ if } i = 0, w \\
0 & \mbox{ \, } \text{otherwise.}
\end{cases}
\]
For more details see \cite[Section 0.a.]{HJY}. We will now state some properties of these categories. We refer the reader to the nice exposition in \cite[Section 1]{HJY} for more details. 

It is known that $\sfT$ is unique up to triangle equivalence \cite[Theorem 2.1]{KYZ}. Moreover, $\sfT$ is Krull-Schmidt and it has finite dimensional Hom-spaces. By \cite[Proposition 1.8]{HJY}, $\sfT$ is $w$-CY, and so we have the following bifunctorial isomorphism:
\[
\Hom_{\sfT} (x,y) \simeq D \Hom_{\sfT} (y, \Sigma^w x),
\]
for every object $x, y$ in $\sfT$. 

The AR-quiver of $\sfT$ consists of $|d|$ copies of $\mathbb{Z} A_{\infty}$, where $d:=w-1$. Given one copy, the others are obtained by applying $\Sigma, \ldots, \Sigma^{\mid d \mid -1}$ (cf. \cite[Proposition 1.10]{HJY}). 

The Hom-spaces in $\sfT$ are easy to compute. Namely, we have the following (cf. \cite[Proposition 2.2]{HJY}):
\begin{equation}
\dim_K \, \Hom_{\sfT} (x, y) = \begin{cases}
1 & \mbox{ if } y \in F^+ (x) \cup F^-(Sx) \\
0 & \mbox{ if } \text{otherwise.} \end{cases}
\end{equation}
Here $S$ denotes the Serre functor ($S = \Sigma^{w}$), $F^+$ denotes the forward Hom-hammock and $F^-$ the backward Hom-hammock (see \cite[Figure 3]{HJY}).\\

In \cite{HJ2}, the authors introduced a combinatorial model for $\sfT$, for $w \geq 2$, which can be easily tweaked for the case when $w \leq -1$. The indecomposable objects will be viewed as \textit{$d$-admissible arcs} of the $\infty$-gon, i.e. as pairs of integers $(t,u)$ with $t > u$ such that $t-u \geq \mid d \mid -1$ and $u - t \equiv 1 (\mod d)$. 

We shall identify indecomposable objects in $\sfT$, up to isomorphism, with the corresponding admissible arcs between vertices of the $\infty$-gon, and we shall freely switch between objects and arcs. We will use roman font for the indecomposable objects of $\sfT$ and typewriter font for the corresponding arcs.

Given an admissible arc $(t,u)$ corresponding to an indecomposable object of $\sfT$, we have: $\Sigma (t,u)  = (t-1,u-1)$, $\tau (t,u) = \Sigma^d (t,u) = (t-d,u-d)$ and $S(t,u) = \Sigma \tau (t,u) = \Sigma^w (t,u) = (t-w, u-w)$. The following figure shows one of the components of the AR-quiver of $\sfT$.

\begin{center}
\hspace{-1.75cm}$\xymatrix@!C=1em{
             & \vdots \ar[dr]         &                       & \vdots \ar[dr]       &                      & \vdots \ar[dr]
&                      & \vdots \ar[dr]       &                     & \vdots \ar[dr]     \\
 \ldots \ar[dr]\ar[ur]  &                       & *\txt{(-4d-1,0)}\ar[dr]\ar[ur] &                      & *\txt{(-3d-1,d)}\ar[dr]\ar[ur] &
& *\txt{(-2d-1,2d)}\ar[dr]\ar[ur] &                     & *\txt{(-d-1,3d)}\ar[dr]\ar[ur] &                          & \ldots \\
 & *\txt{(-4d-1,-d)} \ar[dr]\ar[ur] &                       & *\txt{(-3d-1,0)}\ar[dr]\ar[ur]  &                       & *\txt{(-2d-1,d)}\ar[dr]\ar[ur] &                      & *\txt{(-d-1,2d)}\ar[dr]\ar[ur] &                      & *\txt{(-1,3d)}\ar[dr]\ar[ur]  \\
\ldots \ar[dr]\ar[ur]     &  & *\txt{(-3d-1,-d)}\ar[dr]\ar[ur] &                       & *\txt{(-2d-1,0)}\ar[dr]\ar[ur]  &                      &  *\txt{(-d-1,d)}\ar[dr]\ar[ur]&
& *\txt{(-1,2d)}\ar[dr]\ar[ur]&                     & \ldots\\
 & *\txt{(-3d-1,-2d)}\ar[ur] &                       & *\txt{(-2d-1,-d)}\ar[ur] &                       & *\txt{(-d-1,0)}\ar[ur] &                      & *\txt{(-1,d)}\ar[ur] &                      & *\txt{(d-1,2d)}\ar[ur]}$
\end{center}

Given an indecomposable object $x$ of $\sfT$ and $i \in \mathbb{Z}$, we write: $x^{\perp_i} := \{ y \in \sfT \mid \Ext_{\sfT}^i (x,y) = 0\}$. Similarly, we write: $\,^{\perp_i} x := \{ y \in \sfT \mid \Ext_{\sfT}^i (y,x) = 0\}$.

We will now introduce the main object of study in this paper.

\begin{definition}
A collection $\sfH$ of indecomposable objects in $\sfT$ is called a \textit{$|w|-\Hom$-configuration} if it satisfies the following condition:
\[
h \in \sfH \Leftrightarrow \begin{cases} 
h \in (\sfH)^{\perp_i}, \text{ for } i = w+1, \ldots, -1 & \text{and } \\
h \in (\sfH \setminus \{h\})^{\perp_i} , \text{ for } i = 0, w.
\end{cases}
\]
\end{definition}

Since $\sfT$ is $w$-CY, the condition above is equivalent to
\[
h \in \sfH \Leftrightarrow \begin{cases} 
h \in \,^{\perp_i} (\sfH), \text{ for } i = w+1, \ldots, -1 & \text{and } \\
h \in \,^{\perp_i} (\sfH \setminus \{h\}), \text{ for } i = 0, w.
\end{cases}
\]
Note that when $w = -1$, we recover the definition of Hom-configuration in \cite{CS1}, i.e. a $|-1|-\Hom$-configuration is a maximal Hom-free collection of indecomposable objects.\\ 

\textbf{Notation.} Let $t$ and $u$ be two integers such that $(t,u)$ is a $d$-admissible arc. We call the collection of $d$-admissible arcs of the form $(x,u)$ with $x \geq t$ a \textit{partial right fountain at $u$ starting at $t$}, and we denote it by $\mathbb{RF}(u;t)$. Analogously, we call the collection of $d$-admissible arcs of the form $(t,y)$ with $y \leq u$ a \textit{partial left fountain at $t$ starting at $u$}, and we denote it by $\mathbb{LF}(t;u)$. \\

The following remark gives the list of the indecomposable objects/arcs lying in the forward and backward Hom-hammocks of a given indecomposable object in $\sfT$, in terms of these partial fountains.

\begin{remark}\label{Homhammocksfountains}
Let $x$ be an indecomposable object of $\sfT$ and $\mathtt{x} = (t,u)$ be the corresponding admissible arc. We have:
\begin{enumerate}
\item $F^+ (\mathtt{x}) = \bigcup \limits_{i = 0}^{\frac{u-t-1}{d}-1} \mathbb{LF}(t+id; u)$.
\item $F^- (\mathtt{x}) = \bigcup \limits_{i = 0}^{\frac{u-t-1}{d}-1} \mathbb{RF}(u-id; t)$.
\end{enumerate}
\end{remark}

The following lemma is an immediate consequence of this remark.

\begin{lemma}\label{cruciallemma}
Let $x, y$ be two indecomposable objects of $\sfT$, $\mathtt{x} = (t,u), \mathtt{y}$ be the corresponding arcs, and $j \in \mathbb{Z}$. Then we have $\Ext^j (x, y) \ne 0$ if and only if $\mathtt{y} \in \mathbb{LF}(V_{j};u+j) \cup \mathbb{RF}(V_{j};t-d-1+j)$, where $V_{j} := \{t+id+j \mid \, i= 0, \ldots, \frac{u-t-1}{d}-1 \}$. 
\end{lemma}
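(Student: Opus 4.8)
The plan is to deduce the lemma from Remark~\ref{Homhammocksfountains} by translating everything through the identity $\Ext^j (x,y) = \Hom_{\sfT}(x, \Sigma^j y)$ together with the Hom-space formula recalled above. First I would observe that, by that formula, $\Ext^j(x,y) = \Hom_{\sfT}(x, \Sigma^j y) \ne 0$ holds precisely when $\Sigma^j \mathtt{y} \in F^+(\mathtt{x}) \cup F^-(S\mathtt{x})$, which is in turn equivalent to $\mathtt{y} \in \Sigma^{-j}\bigl(F^+(\mathtt{x}) \cup F^-(S\mathtt{x})\bigr)$. Since $\Sigma(t,u) = (t-1,u-1)$, the functor $\Sigma^{-j}$ sends an arc $(p,q)$ to $(p+j,q+j)$, and so carries a partial fountain to a partial fountain of the same type with both parameters shifted by $j$; explicitly, $\Sigma^{-j}\mathbb{LF}(a;b) = \mathbb{LF}(a+j;b+j)$ and $\Sigma^{-j}\mathbb{RF}(c;e) = \mathbb{RF}(c+j;e+j)$.

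For the forward hammock this already finishes the argument: applying $\Sigma^{-j}$ termwise to the description $F^+(\mathtt{x}) = \bigcup_i \mathbb{LF}(t+id;u)$ from Remark~\ref{Homhammocksfountains} gives $\Sigma^{-j}F^+(\mathtt{x}) = \bigcup_i \mathbb{LF}(t+id+j;u+j) = \mathbb{LF}(V_j;u+j)$, where the union runs over exactly the base points collected in $V_j$.

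The backward hammock carries the only real content. Here I would first compute $S\mathtt{x} = \Sigma^w(t,u) = (t-w,u-w)$ and feed it into the formula for $F^-$. Because $(u-w)-(t-w) = u-t$, the number of constituent fountains is unchanged, so, writing $N := \frac{u-t-1}{d}-1$, one gets $F^-(S\mathtt{x}) = \bigcup_{i=0}^{N} \mathbb{RF}(u-w-id; t-w)$ and hence $\Sigma^{-j}F^-(S\mathtt{x}) = \bigcup_{i=0}^{N} \mathbb{RF}(u-w-id+j; t-w+j)$. It then remains to identify this with $\mathbb{RF}(V_j; t-d-1+j)$. The common endpoint matches immediately, since $d = w-1$ gives $t-w = t-d-1$. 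For the base points I would reindex by $i \mapsto N-i$: using $Nd = u-t-1-d$ and $d-w = -1$, one computes $u-w-Nd = t$, so that $u-w-id = t+(N-i)d$; as $i$ runs over $0,\ldots,N$ this produces precisely the set $\{t+kd : k=0,\ldots,N\}$, that is, $V_j$ after the shift by $j$.

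Combining the two computations yields $\Sigma^{-j}\bigl(F^+(\mathtt{x}) \cup F^-(S\mathtt{x})\bigr) = \mathbb{LF}(V_j;u+j) \cup \mathbb{RF}(V_j; t-d-1+j)$, which is the claim. The bookkeeping with the negative integer $d$ --- in particular checking that $N \geq 0$ follows from admissibility, and verifying the reindexing identity above --- is the step most likely to require care, but it is a direct computation rather than a genuine obstacle.
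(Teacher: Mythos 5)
Your proof is correct and is precisely the deduction the paper intends: the paper dismisses this lemma as ``an immediate consequence'' of Remark~\ref{Homhammocksfountains}, and you have simply made that deduction explicit via $\Ext^j(x,y)=\Hom_{\sfT}(x,\Sigma^j y)$, the Hom-hammock formula, and the shift $\Sigma^{-j}(p,q)=(p+j,q+j)$. The reindexing $i\mapsto N-i$ in the backward hammock, using $Nd=u-t-1-d$ and $d-w=-1$ to get $u-w-id=t+(N-i)d$, is exactly the bookkeeping that makes the two descriptions of the base points in $V_j$ agree, so there is nothing to add.
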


\begin{remark}
It follows from lemma \ref{cruciallemma} that when $w \leq -2$ and $x \in \ind \, \sfT$, we have $\Ext^i (x,x) = 0$ for all $i \in \{w+1, \ldots, -1\}$. 
\end{remark}

\section{Classification of $|w|-\Hom$-configurations in $\sfT$}

In this section we will give a description of $|w|-\Hom$-configurations in $\sfT$ in terms of admissible arcs in the $\infty$-gon. 

\begin{lemma}\label{combinatorialExtifree}
Let $x, y$ be two indecomposable objects in $\sfT$. Then $y \in \bigcap \limits_{i= w}^0 x^{\perp_i}$ if and only if the following conditions hold:
\begin{enumerate}
\item $\mathtt{x}$ and $\mathtt{y}$ do not cross,
\item $\mathtt{x}$ and $\mathtt{y}$ are not incident with the same vertex (of the $\infty$-gon). 
\end{enumerate}
\end{lemma}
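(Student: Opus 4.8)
The plan is to prove the contrapositive in combinatorial form: I will show that $\Ext^{j}_{\sfT}(x,y)\neq 0$ for some $j\in\{w,\dots,0\}$ if and only if $\mathtt{x}$ and $\mathtt{y}$ either cross or meet in a common vertex of the $\infty$-gon; taking complements then yields the equivalence with conditions (1) and (2). The engine of the argument is Lemma~\ref{cruciallemma}, which for each degree $j$ expresses the non-vanishing locus $\{y:\Ext^{j}(x,y)\neq 0\}$ as the union of a partial left fountain $\mathbb{LF}(V_{j};u+j)$ and a partial right fountain $\mathbb{RF}(V_{j};t-d-1+j)$. So the whole task reduces to computing the union of these fountains as $j$ runs over the $|d|$ consecutive integers $w,w+1,\dots,0$ and recognising the result geometrically.

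First I would deal with the \emph{feet} of the fountains, i.e.\ the sets $V_{j}=\{t+id+j : i=0,\dots,\tfrac{u-t-1}{d}-1\}$. As $j$ ranges over the $|d|$ consecutive values $w,\dots,0$, each single foot $t+id$ is replaced by the full run of $|d|$ integers from $t+(i+1)d+1$ to $t+id$; consecutive runs abut, so $\bigcup_{j=w}^{0}V_{j}$ is exactly the integer interval $[u,t]$ (here one uses $u=t+Nd+1$ with $N=\tfrac{u-t-1}{d}$). The genuinely useful observation is that the degree-dependent bound on the second coordinate collapses once admissibility is imposed: for any admissible arc $(a,b)$ one has $b\equiv a+1\pmod{|d|}$, while $\mathtt{x}=(t,u)$ gives $u\equiv t+1\pmod{|d|}$, so the fountain bound satisfies $u-\big((t-a)\bmod|d|\big)\equiv b\pmod{|d|}$. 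Consequently the union of the left fountains is precisely $\{(a,b): u\le a\le t,\ b\le u-((t-a)\bmod|d|)\}$, and --- either by a symmetric computation or by invoking the $w$-CY duality that exchanges $x^{\perp_\bullet}$ with ${}^{\perp_\bullet}x$ --- the union of the right fountains is $\{(a,b): u\le b\le t,\ a\ge t+|d|-1-((t-b)\bmod|d|)\}$.

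It then remains to match these two regions to the geometry. Because admissibility already pins down the residue, the sharp inequality $b\le u-((t-a)\bmod|d|)$ captures all admissible arcs $(a,b)$ with $a\in\{u+1,\dots,t-1\}$ and $b<u$ (the arcs crossing $\mathtt{x}$ whose larger endpoint lies inside $(u,t)$), together with the arcs sharing the vertex $t$ (the case $a=t$) or the vertex $u$ (the cases $a=u$ and $b=u$, where the residue is forced); the right-fountain region symmetrically produces the crossing arcs whose smaller endpoint lies inside $(u,t)$, plus the remaining arcs through $t$ or $u$. Unioning the two shows that the non-vanishing locus is exactly the set of arcs crossing $\mathtt{x}$ together with those sharing a vertex with $\mathtt{x}$, and complementing gives (1) and (2).

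The hard part will be the bookkeeping in the last two steps: verifying that over the $|d|$ degrees the fountains fill the crossing/vertex-sharing region with neither gaps nor spillover onto genuinely compatible arcs. The clean mechanism I would isolate as a sub-claim is precisely the residue rigidity above --- admissibility forces the second coordinate of any candidate arc to be congruent mod $|d|$ to the fountain bound, so each degree's inequality already sees every admissible arc with the prescribed foot and no intermediate arc is skipped. The most delicate points are the boundary/vertex-sharing cases $a\in\{t,u\}$ and $b\in\{t,u\}$, where one must check that each such arc lands in exactly the fountain the computation predicts, and the degenerate case $y=x$ (where $\Ext^{0}(x,x)\neq 0$), which must sit inside the forbidden region in agreement with the failure of condition (2).
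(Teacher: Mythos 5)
Your proposal is correct and follows essentially the same route as the paper's proof: both reduce to Lemma~\ref{cruciallemma}, observe that $\bigcup_{j=w}^{0} V_j = [u,t] \cap \mathbb{Z}$, and identify the union of the partial fountains with the arcs that cross $\mathtt{x}$ or share the vertex $t$ or $u$. The only difference is one of detail: the paper declares this identification immediate, whereas you carry out the bookkeeping (the residue rigidity forced by admissibility and the boundary cases $a,b \in \{t,u\}$) explicitly, and your computations there are accurate.
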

\begin{proof}
The proof follows immediately from lemma \ref{cruciallemma}. Indeed, note that $\bigcup \limits_{j=w}^{0} V_j = [u,t] \cap \mathbb{Z}$, and all arcs satisfying the properties in lemma \ref{cruciallemma} either cross $\mathtt{x} = (t,u)$ or are incident with $t$ or $u$. 
\end{proof}

\begin{lemma}\label{isolatedvertexoverarc}
Let $\sfH$ be a $|w|-\Hom$-configuration in $\sfT$. Given an indecomposable object $x$ in $\sfH$, there are precisely $|w|-1$ isolated vertices whose smallest overarc is $\mathtt{x}$. 
\end{lemma}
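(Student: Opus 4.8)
The plan is to first restate the configuration condition combinatorially. Reading the defining equivalence of a $|w|-\Hom$-configuration together with Lemma~\ref{combinatorialExtifree}, I would show that $\sfH$ is exactly a maximal family of admissible arcs that are pairwise non-crossing and pairwise non-incident (no two sharing an endpoint). Indeed, for distinct $h,h' \in \sfH$ the defining condition forces $\Ext^i(h',h)=0$ for all $i \in \{w,w+1,\ldots,0\}$, so $h \in \bigcap_{i=w}^0 (h')^{\perp_i}$, and Lemma~\ref{combinatorialExtifree} gives non-crossing and non-incident; conversely, if an indecomposable $y \notin \sfH$ had $\mathtt{y}$ non-crossing and non-incident with every arc of $\sfH$, then the same lemma would make $y$ satisfy the right-hand side of the equivalence, forcing $y \in \sfH$, a contradiction. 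Writing $d = w-1$, so that $|w|-1 = |d|-2$ and admissibility reads $t-u \equiv -1 \pmod{|d|}$, the task becomes: given $\mathtt{x}=(t,u) \in \sfH$, show that exactly $|d|-2$ isolated vertices (vertices that are endpoints of no arc of $\sfH$) have $\mathtt{x}$ as their smallest overarc. Since the arcs of $\sfH$ over a fixed vertex are pairwise non-crossing and all contain it, they are totally ordered by nesting, so the smallest overarc is well defined, and the vertices in question are precisely the isolated vertices lying strictly inside $\mathtt{x}$ but inside no \emph{child} of $\mathtt{x}$ (a child being a maximal arc of $\sfH$ nested strictly inside $\mathtt{x}$).

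Next I would analyse the local structure. List these vertices as $b_1 < \cdots < b_N$ and the children of $\mathtt{x}$ as pairwise disjoint sub-arcs. Because any gap between two children, or between a child and an endpoint of $\mathtt{x}$, consists of isolated vertices directly under $\mathtt{x}$, between two consecutive $b_i$ (and between $u$ and $b_1$, and between $b_N$ and $t$) the intervening children must be flush, i.e. separated by no vertex. Each child has span $\equiv -1 \pmod{|d|}$ and each flush junction contributes $+1$, so a block of flush children contributes $\equiv 0 \pmod{|d|}$; hence every consecutive difference among $u,b_1,\ldots,b_N,t$ is $\equiv 1 \pmod{|d|}$. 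Summing, $t-u \equiv N+1 \pmod{|d|}$, and since $t-u \equiv -1 \pmod{|d|}$ this yields $N \equiv |d|-2 \pmod{|d|}$.

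It remains to bound $N$ from above, and this is where maximality enters. The key point is that if $b_i < b_j$ are two of these vertices with $b_j - b_i \equiv -1 \pmod{|d|}$, then the arc $(b_j,b_i)$ is admissible and can be added to $\sfH$: it is nested inside $\mathtt{x}$, it fully contains every child lying between $b_i$ and $b_j$ together with all their descendants (so it crosses nothing), and its endpoints are isolated (so it is incident with no arc of $\sfH$). This contradicts maximality, so no such pair exists. But the consecutive differences being $\equiv 1 \pmod{|d|}$ give $b_j - b_i \equiv j-i \pmod{|d|}$, so $N \geq |d|$ would produce the forbidden pair $(b_{|d|},b_1)$ with difference $\equiv |d|-1 \equiv -1$. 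Hence $N \leq |d|-1$, and combined with $N \equiv |d|-2 \pmod{|d|}$ and $N \geq 0$ this forces $N = |d|-2 = |w|-1$.

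The main obstacle I anticipate is the maximality/addability step: one must check carefully that an arc joining two isolated vertices directly under $\mathtt{x}$ and passing over the intervening children genuinely crosses nothing (this rests on the fact that non-crossing arcs through a common vertex are nested, so \emph{passing over a child} means \emph{fully containing it}), and is therefore a legitimate addition forbidden only by maximality. The accompanying modular bookkeeping --- that each child span is $\equiv -1$ and that flush blocks of children contribute $\equiv 0 \pmod{|d|}$ --- is routine but must be set up so that it applies uniformly, including the degenerate cases $N=0$ (e.g.\ $|w|=1$) and arcs with no children.
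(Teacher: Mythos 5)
Your proof is correct and follows essentially the same route as the paper: both rest on the mod-$|d|$ count coming from the fact that flush blocks of children span multiples of $|d|$ vertices, plus the construction of an addable admissible arc joining two isolated vertices to contradict the maximality built into the definition of a $|w|-\Hom$-configuration. Your packaging is slightly cleaner --- the single telescoping congruence $N \equiv |d|-2 \pmod{|d|}$ replaces the paper's separate case analyses for ``too many'' and ``too few'' isolated vertices --- but the decomposition and key ideas are the same.
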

\begin{proof}
Suppose $\mathtt{x} = (t,u)$ has minimum length $|d|-1 = |w|$. This means that there are precisely $|w|-1$ vertices in $]u,t[$, and these vertices are necessarily isolated. 

Now let $t-u = k |d| - 1$, for some $k \geq 2$ and suppose, for a contradiction, that there are at least $|w|$ isolated vertices in $]u,t[$ whose smallest overarc is $(t,u)$. Let $v_1, v_2, \ldots, v_{|w|}$, with $v_i < v_{i+1}$, be $|w|$ isolated vertices satisfying the property above and such that there are no isolated vertices in each interval $]v_i, v_{i+1}[$ whose smallest overarc is $(t,u)$.  

Let $i \in \{1, \ldots, |w|-1\}$. Since $v_i$ and $v_{i+1}$ have no overarcs inside $]u,t[$, there must be $k_i |d|$ vertices in the interval $]v_i,v_{i+1}[$, for some $k_i \geq 0$. 

Suppose there are no more isolated vertices in $]u,t[$ whose smallest overarc is $(t,u)$. Then there are $k_0 |d|$ and $k_{|w|} |d|$ vertices in $]u,v_1[$ and $]v_{|w|}, t[$, respectively, for some $k_0, k_{|w|} \geq 0$. Therefore, the number of vertices in $[u,t]$ is given by $|w| + 2 + k^\prime |d| = |d| + 1 + k^\prime |d|$, where $k^\prime = \sum_{i= 0}^{|w|} k_i$. On the other hand, the number of vertices in $[u,t]$ must be of the form $l |d|$, for some $l > 0$, since $(t,u)$ is an admissible arc. Hence $|d| (l - k^\prime -1) = 1$, a contradiction as $|d| \geq 2$. Hence, there must be at least one more isolated vertex $v_0$, which we can assume, without loss of generality, to be strictly smaller than $v_1$, and such that there are no isolated vertices in $]v_0,v_1[$ whose smallest overarc is $(t,u)$.

We claim that $(v_{|w|}, v_0)$ is an admissible arc. Indeed, by the same argument as above, there are $k_0 |d|$ vertices in $]v_0,v_1[$, for some $k_0$, and so the number of vertices in $[v_0,v_{|w|}]$ is given by $|d| (1 + \sum_{i = 0}^{|w|-1} k_i)$. 

Hence $(v_{|w|}, v_0)$ is an admissible arc, it does not cross any arc in $\sfH$, nor does it have a common vertex with another arc in $\sfH$, as $v_0$ and $v_{|w|}$ are both isolated. We have reached a contradiction, since $(v_{|w|},v_0) \not\in \sfH$ and $\sfH$ is a $|w|-\Hom$-configuration. This finishes the proof that there are at most $|w|-1$ isolated vertices whose smallest overarc is $(t,u)$. 

Now suppose there are only $|w|-i$ isolated vertices whose smallest overarc is $(t,u)$, for some $ 2 \leq i \leq |w|$. Using the same argument as above, one concludes that there must be $i + 2 + k |d|$ vertices in $[u,t]$, for some $k \geq 0$. However, $(t,u)$ is admissible and so there are $k^\prime |d|$ vertices in $[u,t]$, for some $k^\prime > k$. Hence, we have $(k^\prime - k) = i + 2 \leq |d|-1$, a contradiction as $k^\prime > k$.
\end{proof}

\begin{lemma}\label{isolatedverticesHomconfigs}
A $|w|- \Hom$-configuration in $\sfT$ has at most $|w|$ isolated vertices with no overarc. 
\end{lemma}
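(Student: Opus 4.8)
The plan is to argue by contradiction, exploiting the maximality built into the definition of a $|w|-\Hom$-configuration, in the same spirit as the proof of Lemma \ref{isolatedvertexoverarc}. Throughout, write $d = w-1$, so that $|d| = |w|+1$, and call a vertex of the $\infty$-gon a \emph{top-level isolated vertex} if it is isolated (incident with no arc of $\sfH$) and has no overarc in $\sfH$. Suppose, for contradiction, that $\sfH$ has at least $|d| = |w|+1$ top-level isolated vertices, and let $v_1 < v_2 < \cdots < v_{|d|}$ be the $|d|$ smallest of them; by minimality there is no further top-level isolated vertex in any interval $]v_i, v_{i+1}[$.

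The key combinatorial input, which I would establish first, is that $]v_i, v_{i+1}[$ contains $k_i |d|$ vertices for some $k_i \geq 0$. To see this, observe that every vertex $v \in \,]v_i, v_{i+1}[$ is, by minimality, not top-level isolated, so it is either incident with an arc of $\sfH$ or lies under one; in either case $v$ lies in the closed interval $[u,t]$ of some arc $(t,u) \in \sfH$ having no overarc. Since $v_i$ and $v_{i+1}$ are themselves neither incident with nor strictly under any arc, such a maximal arc must satisfy $v_i < u < t < v_{i+1}$, i.e. it is contained in $]v_i, v_{i+1}[$. Distinct arcs of $\sfH$ without overarcs have disjoint closed intervals (they do not cross and share no vertex, and neither lies under the other), and each such closed interval contains a multiple of $|d|$ vertices since the arcs are admissible. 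As these intervals tile $]v_i, v_{i+1}[$, the count is a multiple of $|d|$, proving the claim.

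Granting this, the closed interval $[v_1, v_{|d|}]$ contains $|d| + \sum_{i=1}^{|d|-1} k_i |d| = |d|\bigl(1 + \sum_i k_i\bigr)$ vertices, so $(v_{|d|}, v_1)$ is an admissible arc. It shares no vertex with any arc of $\sfH$, since $v_1$ and $v_{|d|}$ are isolated, and it crosses no arc of $\sfH$: indeed, as $v_1$ and $v_{|d|}$ have no overarc, no arc of $\sfH$ has either of them strictly in its interior, so every arc of $\sfH$ lies entirely to the left of $v_1$, entirely to the right of $v_{|d|}$, or strictly inside $]v_1, v_{|d|}[$, and in each case it is disjoint from or nested under $(v_{|d|}, v_1)$. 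By Lemma \ref{combinatorialExtifree}, the object $h$ corresponding to $(v_{|d|}, v_1)$ then lies in $\bigcap_{i=w}^{0} s^{\perp_i}$ for every $s \in \sfH$; as $h \notin \sfH$ (its endpoints are isolated), this is precisely the condition in the definition forcing $h \in \sfH$, a contradiction. Hence $\sfH$ has at most $|d|-1 = |w|$ top-level isolated vertices.

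The main obstacle is the counting claim of the second paragraph, namely that the gap between consecutive top-level isolated vertices is divisible by $|d|$, since this is where the admissibility constraint $u-t \equiv 1 \pmod{d}$ must be made to interact with the global non-crossing structure of $\sfH$. Once it is in hand, the construction of the forbidden arc $(v_{|d|}, v_1)$ and the appeal to maximality are routine, paralleling Lemma \ref{isolatedvertexoverarc}.
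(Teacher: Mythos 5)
Your proof is correct and takes essentially the same route as the paper's: the same reduction to $|w|+1$ top-level isolated vertices with no intermediate ones, the same count showing each gap $]v_i,v_{i+1}[$ contains a multiple of $|d|$ vertices so that $(v_{|w|+1},v_1)$ is admissible, and the same contradiction with maximality via Lemma \ref{combinatorialExtifree}. The only difference is that you spell out in full the tiling-by-maximal-arcs argument behind the divisibility claim, which the paper asserts without detail.
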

\begin{proof}
Suppose, for a contradiction, that there are $|w|+1$ isolated vertices with no overarc. Let $v_1, \ldots, v_{|w|+1}$ be vertices satisfying such conditions and suppose that $v_i < v_{i+1}$ and that there are no isolated vertices in $]v_i, v_{i+1}[$ with no overarc. Then, the number of vertices in each interval $]v_i,v_{i+1}[$ is of the form $k_i |d|$, for some $k_i \geq 0$, and so the number of vertices in $[v_1, v_{|w|+1}]$ is given by $|w| + 1 + k |d| = (k+1) |d|$, where $k = \sum_{i=1}^{|w|} k_i$. This shows that the arc $(v_{|w|+1}, v_1)$ is admissible. Moreover, this arc neither crosses nor does it have a common vertex with any other arc of the $|w|-\Hom$-configuration $\sfH$. Therefore $\sfH$ is not a $|w|-\Hom$-configuration, a contradiction.
\end{proof}

The characterisation of $|w|-\Hom$-configurations in $\sfT$ in terms of arcs of the $\infty$-gon is given in the following theorem.

\begin{thm}\label{Homconfigclassification}
A set of admissible arcs $\sfH$ is a $|w|-\Hom$-configuration in $\sfT$ if and only the following conditions hold:
\begin{enumerate}
\item There are no crossings nor arcs incident with the same vertex,
\item Given an arc $\mathtt{a}$ corresponding to an indecomposable object in $\sfH$, there are precisely $|w|-1$ isolated vertices whose smallest overarc is $\mathtt{a}$,
\item There are at most $|w|$ isolated vertices with no overarc. 
\end{enumerate}
\end{thm}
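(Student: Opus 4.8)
The plan is to separate the defining biconditional of a $|w|-\Hom$-configuration into an \emph{orthogonality} part and a \emph{maximality} part, and to translate each into arc combinatorics. For $h \in \sfH$ the requirements $h \in (\sfH)^{\perp_i}$ ($w+1 \le i \le -1$) together with $h \in (\sfH\setminus\{h\})^{\perp_i}$ ($i = 0, w$) amount, for each ordered pair of distinct objects, to $\Ext^i(h'',h) = 0$ for all $i \in \{w, \dots, 0\}$, the diagonal terms $\Ext^i(h,h)=0$ being automatic by the Remark following Lemma~\ref{cruciallemma}. By Lemma~\ref{combinatorialExtifree} this is exactly the statement that $\mathtt{h}$ and $\mathtt{h''}$ neither cross nor meet at a vertex. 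Hence the orthogonality part of the definition is equivalent to condition~(1), and the maximality part says precisely that no admissible arc outside $\sfH$ can be adjoined without creating a crossing or a shared vertex.

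For the forward implication I would note that condition~(1) is the orthogonality translation just described, while conditions~(2) and~(3) are literally the conclusions of Lemmas~\ref{isolatedvertexoverarc} and~\ref{isolatedverticesHomconfigs}, both of which already assume $\sfH$ to be a $|w|-\Hom$-configuration. So this direction is immediate and all the work is in the converse.

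For the converse, assume (1)--(3); condition~(1) gives orthogonality, so it remains to prove maximality. Suppose, for contradiction, that an admissible arc $\mathtt{a} = (t,u) \notin \sfH$ neither crosses nor shares a vertex with any arc of $\sfH$. Then every arc of $\sfH$ meeting $[u,t]$ lies entirely in $]u,t[$; let $\mathtt{b}_1, \dots, \mathtt{b}_m$, with $\mathtt{b}_j = (t_j, u_j)$, be the maximal such arcs, and let $I$ be the set of vertices of $]u,t[$ lying in none of the intervals $[u_j,t_j]$. A short check shows the vertices of $I$ are isolated. Admissibility of $\mathtt{a}$ and of each $\mathtt{b}_j$ gives $|[u,t]| = k|d|$ and $|[u_j,t_j]| = l_j|d|$, so, using $|d| = |w|+1$,
\[
|I| = (k|d| - 2) - \sum_{j} l_j |d| \equiv -2 \equiv |w| - 1 \pmod{|d|}.
\]
The decisive observation is that $u$, $t$ and every vertex of $I$ have exactly the same overarcs in $\sfH$, namely those arcs of $\sfH$ containing the whole interval $[u,t]$: any other $\sfH$-arc passing over one of these vertices would either be contained in $]u,t[$ or else cross $\mathtt{a}$ or meet it at $u$ or $t$. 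I would then distinguish two cases. If $\mathtt{a}$ has an overarc in $\sfH$, its smallest overarc $\mathtt{c}$ is the common smallest overarc of the $|I|+2$ isolated vertices $u$, $t$ and $I$, so condition~(2) forces $|I| + 2 \le |w| - 1$. If $\mathtt{a}$ has no overarc, then $u$, $t$ and $I$ are isolated vertices with no overarc, so condition~(3) forces $|I| + 2 \le |w|$. In either case $0 \le |I| \le |w| - 2 < |d|$, so $|I|$ equals its own residue modulo $|d|$ and must be $|w|-1$, contradicting $|I| \le |w|-2$ (for $w=-1$ the bounds $|I|+2\le 0$ and $|I|+2\le 1$ are themselves absurd). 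This contradiction establishes maximality and completes the converse.

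The main obstacle is this maximality argument: getting the combinatorial bookkeeping of $[u,t]$ exactly right---verifying that the gap vertices $I$ are genuinely isolated and that $u$, $t$ and $I$ share precisely the overarcs of $\mathtt{a}$---and then matching the congruence $|I| \equiv |w| - 1 \pmod{|d|}$ against the upper bounds supplied by~(2) and~(3). Everything else reduces to Lemma~\ref{combinatorialExtifree} and the two preceding lemmas.
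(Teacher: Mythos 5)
Your proposal is correct, and while the forward direction is the same as the paper's (both simply invoke Lemmas \ref{combinatorialExtifree}, \ref{isolatedvertexoverarc} and \ref{isolatedverticesHomconfigs}), your converse takes a genuinely different route. The paper argues as follows: given an admissible arc $\mathtt{x}=(t,u)\notin\sfH$ that neither crosses nor shares a vertex with $\sfH$, it embeds $\sfH\cup\{\mathtt{x}\}$ into a full $|w|-\Hom$-configuration $\sfH^\prime$ and then applies Lemma \ref{isolatedvertexoverarc} \emph{to $\sfH^\prime$}: if $t$ and $u$ have a smallest overarc $\mathtt{a}$ in $\sfH$, the count of isolated vertices with smallest overarc $\mathtt{a}$ drops below $|w|-1$ in $\sfH^\prime$, contradicting the lemma; if they have no overarc, the lemma applied to $\mathtt{x}$ in $\sfH^\prime$ produces $|w|-1$ isolated vertices inside $]u,t[$, which together with $t,u$ give $|w|+1$ isolated vertices without overarc in $\sfH$, contradicting (3). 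This argument tacitly assumes that any such $\sfH\cup\{\mathtt{x}\}$ \emph{can} be completed to a $|w|-\Hom$-configuration $\sfH^\prime$, a point the paper does not justify (in this infinite setting it would require a Zorn-type argument). Your proof avoids the completion step entirely: you decompose $]u,t[$ via the maximal $\sfH$-arcs it contains, verify the gap vertices $I$ together with $u,t$ are isolated and share exactly the overarcs of $[u,t]$, and play the congruence $|I|\equiv |w|-1 \pmod{|d|}$ (from admissibility of $\mathtt{a}$ and the $\mathtt{b}_j$, using $|d|=|w|+1$) against the upper bounds $|I|+2\le |w|-1$ or $|I|+2\le |w|$ supplied by (2) and (3), with the degenerate case $w=-1$ handled separately. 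This is in fact the same mod-$|d|$ counting technique the paper uses \emph{inside} the proofs of Lemmas \ref{isolatedvertexoverarc} and \ref{isolatedverticesHomconfigs}, applied once more at the level of the theorem; what it buys is a self-contained converse free of the unproved extension hypothesis, at the cost of redoing some interval bookkeeping (one small point worth making explicit in your write-up: the common smallest overarc $\mathtt{c}$ exists because the overarcs of a fixed vertex in a noncrossing family form a nested chain of intervals containing $[u,t]$, hence have a minimal element).
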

\begin{proof}
Lemmas \ref{combinatorialExtifree}, \ref{isolatedvertexoverarc} and \ref{isolatedverticesHomconfigs} give us the necessary conditions. Now, let $\sfH$ be a collection of indecomposable objects of $\sfT$ whose corresponding arcs satisfy conditions $(1), (2)$ and $(3)$, and suppose for a contradiction that $\sfH$ is not a $|w|-\Hom$-configuration. By lemma \ref{combinatorialExtifree} and condition $(1)$, this means that there is an indecomposable object $x$ in $\sfT \setminus \sfH$ such that $x \in (\sfH)^{\perp_i}$, for all $i = w, \ldots, 0$. Let $\mathtt{x} = (t,u)$ and let $\sfH^\prime$ be a $|w|-\Hom$-configuration containing $\sfH \cup \{x\}$. Note that $t$ and $u$ must be isolated vertices in $\sfH$, and they either share the smallest overarc $\mathtt{a}$ or they do not have any overarc in $\sfH$. However, the first case contradicts lemma \ref{isolatedvertexoverarc} for $\sfH^\prime$, since there are fewer than $|w|-1$ isolated vertices whose smallest overarc is $\mathtt{a}$ by condition $(2)$. Hence, $t$ and $u$ are isolated vertices in $\sfH$ with no overarcs. Since $\sfH^\prime$ is a $|w|-\Hom$-configuration, by lemma \ref{isolatedvertexoverarc} there are precisely $|w|-1$ isolated vertices in $]u,t[$. Note that these vertices are isolated with no overarcs in $\sfH$ since $(t,u)$ is their smallest overarc in $\sfH \cup \{\mathtt{x}\}$. Therefore, $\sfH$ has $|w|+1$ isolated vertices with no overarcs, which contradicts condition $(3)$.
\end{proof}

\begin{ex}\label{canonicalexamples}
Let $w = -1$. Then the admissible arcs are the arcs with odd length. There are two canonical classes of examples of $|-1|-\Hom$-configurations in $\sfT$. The first one is of the form $\sfH_1 := \{ (j, j-1) \mid j = 2k, k \in \mathbb{Z}\}$ (or of the form $\{ (j,j-1) \mid j = 2k-1, k \in \mathbb{Z} \}$), which corresponds to every object in the mouth of one of the two components of the AR-quiver of $\sfT$. The second one is of the form $\sfH_2:= \{ (j+1,j) \mid j = i + (2k+1), k \in \mathbb{N}_0\} \cup \{(j, j-1) \mid j = i + (2k+1), k \in \mathbb{Z}_{<0} \}$, for a fixed integer $i$. The first subset of $\sfH_2$ corresponds to arcs in the mouth of one of the components of the AR-quiver of $\sfT$, whilst the second subset corresponds to arcs lying in the mouth of the other component.

It is natural to ask whether there is a representation-theoretic difference between these two classes of examples. The answer will be given in the next section.
\end{ex}

\section{Riedtmann configurations}

It was seen in \cite{BRT} and \cite{CS1} that the notion of Hom-configurations in the orbit category $\C_1 (Q)$, where $Q$ is of Dynkin type, coincides with that of Riedtmann configurations, which are certain collections of objects introduced in \cite{Riedtmann} in order to classify selfinjective algebras of finite representation type. 

In this section we will give a generalization of this concept and compare it with $|w|-\Hom$-configurations. Unlike the case in the orbit category, we do not have a one-to-one correspondence between these two classes of objects, but we will see in which circumstances a $|w|-\Hom$-configuration is a $|w|-$Riedtmann configuration. 

\begin{definition}\label{def:Riedtmannconf}
A collection $\sfC$ of indecomposable objects of $\sfT$ is called a:
\begin{enumerate}
\item \textit{left $|w|-$Riedtmann configuration} if:
\begin{enumerate}[(a)]
\item $\Ext^i_{\sfT} (x,y) = 0$, for all $x, y \in \sfC, x \ne y, i \in \{w, \ldots, 0\}$.
\item For every $z \in \sfT$, there is an $x \in \sfC$ and $i \in \{w+1, \ldots, 0\}$ such that $\Ext^i_{\sfT} (x, z) \ne 0$. 
\end{enumerate}
\item \textit{right $|w|-$Riedtmann configuration} if it satisfies (a) and:
\begin{enumerate}[(b$^\prime$)]
\item For every $z \in \sfT$, there is an $x \in \sfC$ and $i \in \{w+1, \ldots, 0\}$ such that $\Ext^i_{\sfT} (z,x) \ne 0$. 
\end{enumerate}
\end{enumerate}
\end{definition}

It is obvious that any left or right $|w|-$Riedtmann configuration is a $|w|-\Hom$-configuration.

When $w = -1$, we recover Riedtmann's original definition \cite{Riedtmann}. Note that, if we let $i$, in (b) and (b$^\prime$), range from $\left \lfloor \frac{w}{2} \right \rfloor$ to $0$ instead, we would get another generalisation of Riedtmann's original definition. Let us call this different notion by \textit{alternative} left/right $|w|-$Riedtmann-configuration. The reason why we chose definition \ref{def:Riedtmannconf} to be the generalisation of Riedtmann's configuration is explained in the following lemma. 

\begin{lemma}
There are no alternative left/right $|w|-$Riedtmann-configurations in $\sfT$.
\end{lemma}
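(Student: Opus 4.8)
The statement to prove is that there are no \emph{alternative} left/right $|w|-$Riedtmann configurations, where the alternative definition lets $i$ in conditions (b)/(b$^\prime$) range over $\{\lfloor w/2 \rfloor, \ldots, 0\}$ rather than $\{w+1, \ldots, 0\}$. Since $w \leq -1$, the range $\{\lfloor w/2 \rfloor, \ldots, 0\}$ is a strictly smaller set of degrees than $\{w+1, \ldots, 0\}$ (indeed $\lfloor w/2 \rfloor \geq w+1$), so the alternative covering condition is strictly stronger: fewer degrees are allowed to witness the non-vanishing of $\Ext^i(x,z)$. The plan is to show that this stronger covering condition is \emph{incompatible} with the Hom-orthogonality condition (a), so that no collection $\sfC$ can satisfy both simultaneously.

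The approach I would take is to work entirely combinatorially via the geometric model, using Lemma~\ref{cruciallemma} and Lemma~\ref{combinatorialExtifree} to translate the $\Ext^i$-vanishing/non-vanishing conditions into statements about arcs in the $\infty$-gon. First I would observe that, by the same argument as in Theorem~\ref{Homconfigclassification}, any collection satisfying (a) consists of pairwise non-crossing arcs sharing no endpoints. For such a collection the alternative condition (b) demands that every indecomposable $z$ is ``hit'' in one of the degrees $i \in \{\lfloor w/2 \rfloor, \ldots, 0\}$ by some $x \in \sfC$. Using Lemma~\ref{cruciallemma}, the set of objects $z$ with $\Ext^i(x,z) \neq 0$ for some $i$ in this restricted range corresponds to arcs lying in a proper sub-union of the fountains $\mathbb{LF}(V_i; u+i) \cup \mathbb{RF}(V_i; t-d-1+i)$, and the key point is that the ``missing'' degrees $\{w+1, \ldots, \lfloor w/2\rfloor - 1\}$ leave genuine gaps in the coverage near each arc $\mathtt{x}$.

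The concrete way to produce the contradiction is to exhibit, for any collection $\sfC$ satisfying (a), an explicit indecomposable object $z$ that is not covered in the restricted degree range. I would fix an arc $\mathtt{x} = (t,u) \in \sfC$ and analyze the objects $z$ whose arcs sit in the hammock region ``just outside'' what the degrees $\{\lfloor w/2\rfloor, \ldots, 0\}$ reach from $\mathtt{x}$; because the allowed degrees are bounded below by $\lfloor w/2 \rfloor$ rather than $w+1$, these objects cannot be hit from $\mathtt{x}$ in the permitted degrees. One then checks, using condition (a) together with the non-crossing/shared-vertex structure, that such a $z$ also cannot be covered by any \emph{other} arc $\mathtt{y} \in \sfC$ in the restricted range, since the orthogonality condition (a) forces the arcs of $\sfC$ to be spread apart in a way that prevents two distinct arcs from jointly covering the gap. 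This violates the alternative version of (b), and an entirely symmetric argument (via the CY duality $\Hom(x,y) \simeq D\Hom(y, \Sigma^w x)$) handles (b$^\prime$).

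The main obstacle I expect is a careful bookkeeping of the index sets $V_i$ and the exact location of the gap region, to make sure the constructed $z$ is simultaneously (i) a genuine admissible arc, (ii) outside the reach of every $x \in \sfC$ in degrees $\{\lfloor w/2\rfloor, \ldots, 0\}$, and (iii) not ruled out by some coincidence with the original degrees $\{w+1,\ldots,0\}$. The cleanest route is probably to reduce to the two ``canonical'' extreme configurations (the mouth configurations of Example~\ref{canonicalexamples} and their higher-$|w|$ analogues), verify the failure there by direct computation with Lemma~\ref{cruciallemma}, and then argue that the general case only makes the gaps larger; this localizes the delicate arithmetic to a single worked instance rather than an arbitrary configuration.
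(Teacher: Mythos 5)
Your overall strategy---translate everything into arcs via Lemma~\ref{cruciallemma}, exhibit for every collection satisfying (a) an explicit indecomposable $z$ that no arc of the collection reaches in the degrees $\{\lfloor w/2\rfloor,\ldots,0\}$, and handle the left case by Serre duality---is the right one, and it is the paper's strategy. But the proposal is missing the one idea that makes it work: the choice of witness. The paper first notes that an alternative configuration is in particular a $|w|$-$\Hom$-configuration, and hence must contain an arc $(t,u)$ of \emph{minimum admissible length} $|w|$ (this existence follows from Lemma~\ref{isolatedvertexoverarc}: underneath any longer arc of a $|w|$-$\Hom$-configuration only $|w|-1$ vertices can be isolated with that arc as smallest overarc, so a strictly shorter arc of $\sfH$ is nested below it, and the descent in length terminates). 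The witness is then $z=(t-1,u-1)$, the shift of this minimal arc. Because $(t,u)$ is minimal, the index set $V_j$ of Lemma~\ref{cruciallemma} is a singleton, and a short check shows every arc hitting $z$ in a degree $i\in\{\lfloor w/2\rfloor,\ldots,0\}$ either crosses $(t,u)$ or shares a vertex with it; the only member of $\sfH$ allowed to share a vertex with $(t,u)$ is $(t,u)$ itself, and one verifies it reaches $z$ only in degree $w+1$, which the alternative range excludes---this is exactly the ``delicate arithmetic'' you anticipated, and it is localized at the minimal arc, not at an arbitrary one. Your plan instead fixes an \emph{arbitrary} $\mathtt{x}\in\sfC$ and asserts that condition (a) keeps the other arcs ``spread apart'' so that none can cover the gap near $\mathtt{x}$. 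That assertion fails in general: by Theorem~\ref{Homconfigclassification}, beneath and around a long arc of a $|w|$-$\Hom$-configuration there are necessarily many further arcs of the configuration, and these can perfectly well hit objects in the uncovered hammock region of $\mathtt{x}$ in the restricted degrees. Without the minimal-length selection, your witness construction does not close.

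The proposed fallback---verify failure on the canonical mouth configurations of Example~\ref{canonicalexamples} and argue that ``the general case only makes the gaps larger''---is logically backwards. Non-existence requires an uncovered $z$ for \emph{every} collection satisfying (a) together with the covering condition; failure of the extreme configurations says nothing about the others, and the monotonicity you invoke is neither defined nor true, since the reachable regions depend on the nesting pattern of the particular configuration and admit no containment comparison with the mouth configurations. One further caution: your opening claim that the alternative range is ``strictly smaller'' since $\lfloor w/2\rfloor \geq w+1$ is only an inequality, and it is exactly the exclusion of the single degree $w+1$ (where the minimal arc does reach $z$) that the argument turns on; so the strictness $\lfloor w/2\rfloor > w+1$ is what you must actually use, not merely the heuristic that ``fewer degrees are allowed.''
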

\begin{proof}
We only prove that $\sfT$ has no alternative right $|w|-$Riedtmann configurations, as the argument for alternative left$|w|-$Riedtmann configurations is similar. Suppose, for a contradiction, that there is a alternative right $|w|-$Riedtmann configuration $\sfH$. Then, in particular, $\sfH$ is a $|w|-\Hom$-configuration, and so there must be an arc $(t,u)$ in $\sfH$ of minimum length $|w|$. Consider $z \in \sfT$ whose corresponding arc is $(t-1,u-1)$. We claim that there is no $x \in \sfH$ such that $\Ext_{\sfT}^i (z,x) \ne 0$, for some $i \in \{ \left \lfloor \frac{w}{2} \right \rfloor, \ldots, 0\}$. Indeed, $x$ satisfies this condition if and only if $\mathtt{x} \in \mathbb{LF}(t+i-1,u+i-1) \cup \mathbb{RF}(t+i-1;t-1-w+i)$. But any arc satisfying this condition, for any $i \in \{\left \lfloor \frac{w}{2} \right \rfloor, \ldots, 0\}$, either crosses $(t,u)$ or is incident with $t$ or $u$, and so it cannot lie in $\sfH$. 
\end{proof} 

\begin{thm}
Let $\sfH$ be a $|w|-\Hom$-configuration in $\sfT$. The following are equivalent:
\begin{enumerate}
\item $\sfH$ is a left $|w|-$Riedtmann configuration,
\item $\sfH$ is a right $|w|-$Riedtmann configuration,
\item There are at most $|w|-1$ isolated vertices with no overarcs. 
\end{enumerate}
\end{thm}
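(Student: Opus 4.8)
The plan is first to strip the statement down to a single combinatorial condition. Since $\sfH$ is assumed to be a $|w|-\Hom$-configuration, condition (a) of Definition \ref{def:Riedtmannconf} is automatic: the defining equivalence of a Hom-configuration already forces $\Ext^i(x,y)=0$ for all distinct $x,y\in\sfH$ and all $i\in\{w,\dots,0\}$. Thus I only need to analyse the covering conditions (b) and (b$^\prime$). For (b), objects $z\in\sfH$ are covered trivially (take $x=z$, $i=0$), so only $z\notin\sfH$ matter. Reading the defining equivalence of a Hom-configuration contrapositively shows that every $z\notin\sfH$ already satisfies $\Ext^i(x,z)\ne0$ for some $x\in\sfH$ and some $i\in\{w,\dots,0\}$; hence (b) fails precisely when some $z\notin\sfH$ is detected by $\sfH$ \emph{only} in degree $w$. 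Using the $w$-CY isomorphism in the form $\Ext^i(x,z)\simeq D\Ext^{w-i}(z,x)$, with the special case $\Ext^w(x,z)\simeq D\Hom(z,x)$, I will rephrase this as: (b) fails if and only if there is an indecomposable $z\notin\sfH$ with $\Hom(z,x_0)\ne0$ for some $x_0\in\sfH$ but $\Ext^j(z,x)=0$ for all $x\in\sfH$ and all $j\in\{w,\dots,-1\}$. The point of this reformulation is that all Ext-groups are now computed out of $z$, so that Lemmas \ref{cruciallemma} and \ref{combinatorialExtifree} apply directly with $z$ in the first slot.

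Next I would prove that the failure of $(3)$ implies the failure of $(1)$. By Lemma \ref{isolatedverticesHomconfigs}, the negation of $(3)$ means $\sfH$ has exactly $|w|$ isolated vertices with no overarc, say $v_1<\dots<v_{|w|}$. As in the proofs of Lemmas \ref{isolatedvertexoverarc} and \ref{isolatedverticesHomconfigs}, each interval between consecutive $v_i$ contains a multiple of $|d|$ vertices, so $[v_1,v_{|w|}]$ contains a number of vertices $\equiv-1\pmod{|d|}$; consequently $\mathtt{z}:=(v_{|w|}+1,v_1)$ spans a number of vertices $\equiv0\pmod{|d|}$ and of length $\geq|d|-1$, hence is an admissible arc. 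Since $v_1$ is isolated, $z\notin\sfH$. I then verify, using Lemma \ref{cruciallemma} with $z$ in the first argument, that $\Ext^j(z,x)=0$ for every $x\in\sfH$ and every $j\in\{w,\dots,-1\}$: the bareness of $v_1,\dots,v_{|w|}$ (no incidences, no overarcs) together with the multiple-of-$|d|$ spacing rules out $\mathtt{x}$ lying in any of the partial fountains $\mathbb{LF}(V_j;u_z+j)\cup\mathbb{RF}(V_j;t_z-d-1+j)$ of $z$. As $z\notin\sfH$, the Hom-configuration property then forces the one remaining degree $\Hom(z,x_0)=\Ext^0(z,x_0)\ne0$ for some $x_0$; that is, $z$ is detected only in degree $w$, so (b) fails and $\sfH$ is not a left $|w|-$Riedtmann configuration.

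For the converse, that the failure of $(1)$ implies the failure of $(3)$, I would mirror the proof of Theorem \ref{Homconfigclassification}. Given $z=(t_z,u_z)\notin\sfH$ detected only in degree $w$, the vanishing $\Ext^j(z,x)=0$ for $j\in\{w,\dots,-1\}$, combined with Lemma \ref{combinatorialExtifree}, forces the smaller endpoint $u_z$ to be an isolated vertex with no overarc and forces exactly $|w|-1$ further isolated vertices with no overarc inside $]u_z,t_z[$ (the same counting argument modulo $|d|$ that appears in Lemma \ref{isolatedvertexoverarc}). This produces at least $|w|$ isolated vertices with no overarc, which by Lemma \ref{isolatedverticesHomconfigs} must be exactly $|w|$, i.e. $(3)$ fails. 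Together with the previous paragraph this gives $(1)\Leftrightarrow(3)$. Finally $(2)\Leftrightarrow(3)$ follows by the same argument with all arrows reversed: the reflection $\rho(t,u)=(-u,-t)$ of the $\infty$-gon preserves admissibility, crossings, incidences and overarcs, hence preserves $|w|-\Hom$-configurations and condition $(3)$, while the Serre-duality symmetry $\Ext^i(x,z)\simeq D\Ext^{w-i}(z,x)$ interchanges the covering conditions (b) and (b$^\prime$); thus $\sfH$ is a right $|w|-$Riedtmann configuration iff $\rho\sfH$ is a left one iff $\rho\sfH$, equivalently $\sfH$, satisfies $(3)$. (Alternatively one repeats the construction on the other side, replacing the witness $(v_{|w|}+1,v_1)$ by $(v_{|w|},v_1-1)$.)

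I expect the main obstacle to be the bookkeeping in the second paragraph: checking that the single arc $\mathtt{z}=(v_{|w|}+1,v_1)$ simultaneously avoids all the fountains attached to every $x\in\sfH$ across all degrees $j\in\{w,\dots,-1\}$, and dually the extraction of precisely $|w|-1$ interior bare vertices in the third paragraph. Both ultimately reduce to the modulo-$|d|$ vertex counting already used for Lemmas \ref{isolatedvertexoverarc} and \ref{isolatedverticesHomconfigs}, but keeping track of the correct side, which is exactly what distinguishes the witness $(v_{|w|}+1,v_1)$ for (b) from $(v_{|w|},v_1-1)$ for (b$^\prime$), requires some care.
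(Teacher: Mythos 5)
Your first paragraph (reducing everything to the covering conditions, and dualising so that all Ext-groups are computed out of the test object $z$) is correct, and your second paragraph is essentially the paper's own proof of $(1)\Rightarrow(3)$ in contrapositive form, with the same witness arc $(v_{|w|}+1,v_1)$ and the same mod-$|d|$ counting. The gap is in your third paragraph, i.e.\ in the hard direction $(3)\Rightarrow(1)$. You claim that a witness $z=(t_z,u_z)\notin\sfH$ with $\Ext^j(z,x)=0$ for all $x\in\sfH$ and all $j\in\{w,\dots,-1\}$ must, ``by Lemma \ref{combinatorialExtifree}'', have $u_z$ isolated with no overarc together with exactly $|w|-1$ further bare vertices in $]u_z,t_z[$. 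But Lemma \ref{combinatorialExtifree} characterises vanishing in \emph{all} degrees $w,\dots,0$, and degree $0$ is exactly what is missing here: by your own reformulation, $\Hom(z,x_0)\ne 0$ for some $x_0\in\sfH$, so arcs of $\sfH$ are allowed to cross $\mathtt{z}$ and to be incident with its endpoints (in the patterns the paper encodes as conditions (I)--(III)). In fact the claimed structure is false. Counterexample in $\sfT_{-1}$: take the Hom-configuration $\sfH=\{(2k+2,2k+1)\mid k\geq 0\}\cup\{(-2k-1,-2k-2)\mid k\geq 0\}$ (the canonical $\sfH_2$ of Example \ref{canonicalexamples} with $i=0$) and $z=(3,-2)$. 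By Lemma \ref{cruciallemma}, a mouth arc $(m+1,m)$ detects $y$ in degree $0$ precisely when $\mathtt{y}$ has an endpoint at $m+1$; since $3$ and $-2$ are neither positive even nor negative odd, $\Hom(x,z)=0$ for every $x\in\sfH$, so $z$ witnesses the failure of condition (b) of Definition \ref{def:Riedtmannconf}. Yet $u_z=-2$ is an endpoint of $(-1,-2)\in\sfH$, hence not isolated, and $t_z=3$ is an endpoint of $(4,3)$; the unique bare vertex of $\sfH$ is $0$, which lies strictly inside $\mathtt{z}$ but is not an endpoint of it.

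So the implication you need is true but cannot be read off locally from $\mathtt{z}$: the $|w|$ bare vertices need not sit where you place them, and locating them is where the actual work lies. The paper proves $(3)\Rightarrow(2)$ (and analogously $(3)\Rightarrow(1)$) directly: it cuts $]u,t]$ into the $|w|$-element blocks $X_1,\dots,X_k$ determined by $V_w$, translates non-detection into the failure of (I) combined with (II)/(III), and then runs an iterated chase (Cases 1 and 2, with subcases) producing a sequence of arcs $\mathtt{a}_1,\dots,\mathtt{a}_r$ descending through the blocks; at each step the mod-$|d|$ count yields either $|w|$ isolated vertices sharing a smallest overarc (contradicting Lemma \ref{isolatedvertexoverarc}) or $|w|$ bare vertices (contradicting (3)), until the chase is cornered at $X_k$. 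This is the essential content of the theorem, not the ``bookkeeping'' you anticipate at the end, and your outline contains no substitute for it. (Your symmetry reduction of (2) to (1) via the reflection $(t,u)\mapsto(-u,-t)$ is fine in principle, modulo verifying $\Ext^i(x,y)\simeq\Ext^i(\rho y,\rho x)$ from Lemma \ref{cruciallemma}, but it does not repair the missing direction.)
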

\begin{proof}
$(2) \Rightarrow (3)$: Suppose $\sfH$ has precisely $|w|$ isolated vertices with no overarc. Let $v_1, \ldots, v_{|w|}$, with $v_i < v_{i+1}$ be such vertices and consider the arc $\mathtt{a} = (v_{|w|}, v_1-1)$. Since there are $k_i |d|$ vertices in each interval $]v_i,v_{i+1}[$, for some $k_i \geq 0$, the number of vertices in $[v_1-1,v_{|w|}]$ is given by $|d| (k+1)$, where $k = \sum_{i=1}^{|w|-1} k_i$. Hence the arc $\mathtt{a}$ is admissible. 

We claim that $\Ext^i_{\sfT} (a, \sfH) = 0$, for all $i = w+1, \ldots, 0$. Indeed, given $h \in \sfH$ and $i \in \{w+1, \ldots, 0\}$, we have $\Ext^i_{\sfT} (a,h) \ne 0$ if and only if $\mathtt{h} \in \mathbb{LF}(V_i; v_1-1+i) \cup \mathbb{RF}(V_i; v_{|w|}-w+i)$, where $V_i = \{v_1-1-w+i, \ldots, v_{|w|}+d+i, v_{|w|}+i\}$. There cannot be any indecomposable $h \in \sfH$ satisfying these conditions, since the isolated vertices $v_1, \ldots, v_{|w|}$ have no overarcs. Therefore $\sfH$ is not a right $|w|-$Riedtmann configuration. 

$(1) \Rightarrow (3)$: As above, using the arc $(v_{|w|}+1, v_1)$ instead of $\mathtt{a}$.

$(3) \Rightarrow (2)$: Let $\mathtt{a} = (t,u) \in \sfT$, with $t-u = k |d| -1$, for some $k \geq 1$. We would like to prove that there is $x \in \sfH$ and $i \in \{w+1, \ldots, 0\}$ such that $\Ext^i_{\sfT} (a, x) \ne 0$. 

By lemma \ref{cruciallemma}, we have that, for each $i \in \{w, \ldots, 0\}$, $\Ext^i (a, x) \ne 0$ if and only if $\mathtt{x} \in \mathbb{LF}(V_i;u+i) \cup \mathbb{RF}(V_i;t+i-w)$, with $V_i = \{t+i+jd \mid j= 0, \ldots, k-1\}$. 

Let $z_j := t+w+(j-1)d$, with $j = 1, \ldots, k$. Note that these vertices are the elements of $V_w$ and that $z_k = u$. Denote by $X_1$ the set of integers in the interval $]z_1,t]$ and by $X_j$ the set of integers in $]z_i,z_{i-1}[$, for $j = 2, \ldots, k$. See figure \ref{exXis} for an example in $\sfT_{-2}$, where $(t,u) = (11,0)$.

\begin{figure}[!ht]
\psfragscanon
\psfrag{X1}{\small{$X_1$}}
\psfrag{X2}{\small{$X_2$}}
\psfrag{X3}{\small{$X_3$}}
\psfrag{X4}{\small{$X_4$}}
\includegraphics[scale=.8]{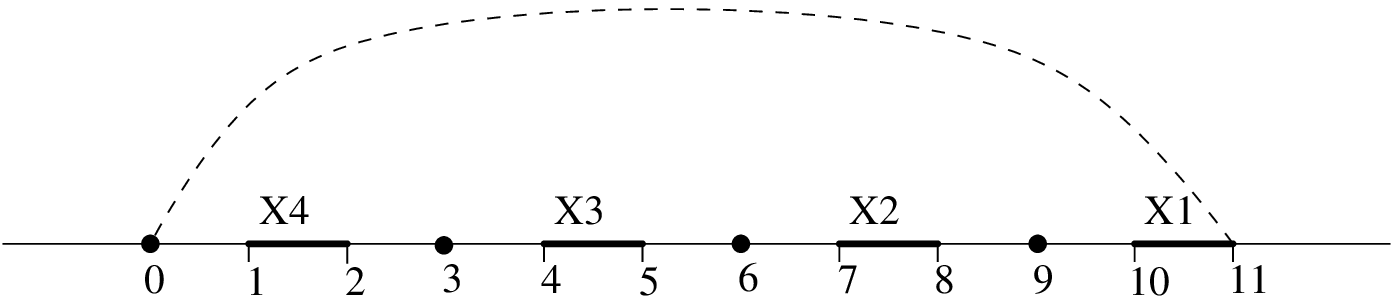}
\caption{An example in $\sfT_{-2}$.}
\label{exXis}
\end{figure}

With this new notation, we say that $\Ext^i (a,x) \ne 0$, for some $i \in \{w+1, \ldots, 0\}$, if and only if:
\begin{enumerate}[(I)]
\item $\mathtt{x}$ is incident with a vertex in $\bigcup \limits_{j=1}^k X_j$,
\end{enumerate}
and one of the following conditions hold:
\begin{enumerate}
\item[(II)] $\mathtt{x}$ crosses $\mathtt{a}$ or is incident with $u$,
\item[(III)] $\mathtt{x}$ is of the form $\mathtt{x} = (t,v)$ with $v<u$ or $\mathtt{x} = (v,t)$ with $v>t$. 
\end{enumerate}

First, let us assume that $\mathtt{a}$ has minimum length, i.e. $k = 1$. On one hand, since $\sfH$ is a $|w|-\Hom$-configuration, there are precisely $|w|-1$ isolated vertices sharing the same smallest overarc. On the other hand, by hypothesis, there is at most $|w|-1$ isolated vertices with no overarc. Therefore, since $X_1$ has $|w|$ consecutive integers, there must be an arc $\mathtt{x}$ in $\sfH$ incident with one of the vertices in $X_1$. Since any arc incident with one vertex in $X_1$ satisfies conditions $(I)$ and either $(II)$ or $(III)$, we are done. 

Now let $k >1$, and suppose, for a contradiction, that every arc in $\sfH$ fails one of the conditions above. Since $X_1$ has $|w|$ consecutive integers, one of them must be non-isolated. Let $v_1$ be the largest non-isolated vertex in $X_1$. 

\textbf{Case 1.} $v_1 \ne t$. 

Let $\mathtt{a}_1$ be the arc in $\sfH$ incident with $v_1$. By assumption, $\mathtt{a}_1$ does not satisfy $(II)$, and so $\mathtt{a}_1$ must be of the form $(v_1,v_{i_1})$, with $v_{i_1} \in X_{i_1}$, $i_1 \in \{2, \ldots, k\}$. If every integer in $]z_{i_1}, v_{i_1}[$ is isolated, then the number of isolated vertices in $]z_{i_1}, t]$ which do not have $\mathtt{a}_1$ as an overarc is given by $(i_1 |d|-1) - ((i_1-1)|d|) = |w|$, where the first summand is the number of integers in $]z_{i_1},t]$ and the second summand is the number of integers in $[v_{i_1},v_1]$. Note that these $|w|$ isolated vertices either do not have any overarc, or they share an overarc, which contradicts the hypothesis and the fact that $\sfH$ is a $|w|-\Hom$-configuration. Therefore, there is at least one non-isolated vertex in $]z_{i_1},v_{i_1}[$. Let $v^\prime_{i_1}$ be the largest one and $\mathtt{a}_2$ the arc incident with it. If $i_1 = k$, then any arc incident with $v_{i_1}^\prime$ satisfies $(I)$ and $(II)$, a contradiction. If $i_1 < k$, then, by assumption, $\mathtt{a}_2$ is of the form $(v_{i_1}^\prime, v_{i_2})$ with $v_{i_2} \in X_{i_2}$, $i_1 < i_2 \leq k$. Again, if the integers in $]z_{i_2},v_{i_2}[$ are all isolated, then we would have $|w|$ isolated vertices in $]z_{i_2}, t]$ with either a common overarc or with no overarc, which is a contradiction. Hence there is a non-isolated vertex in $]z_{i_2},v_{i_2}[$.

Proceeding this way, we get a sequence of arcs $\mathtt{a_1}, \ldots, \mathtt{a}_r$, such that $\mathtt{a}_r = (v^\prime_{i_{r-1}},v_k)$, with $v_k \in X_k$. Using the same argument as above, one of the integers in $]u,v_k[$ must be non-isolated, but any arc incident with that integer satisfies $(I)$ and $(II)$, a contradiction. We are then left with the following case.

\textbf{Case 2.} $v_1 = t$. 

By assumption, the arc $\mathtt{a}_1$ incident with $v_1$ does not satisfy $(III)$, and so it must be of the form $\mathtt{a}_1 = (t,z_{i_1})$, for some $i_1 \in \{1, \ldots, k-1\}$, because $\mathtt{a}_1$ is admissible. Since $X_{i_1+1}$ has $|w|$ (consecutive) elements, one of them must be non-isolated. Let $v_{i_1+1}$ be the largest one, and let $\mathtt{a}_2 = (v_{i_1+1},v_{i_2})$ be the arc incident with it, where $v_{i_2} \in ]u,z_{i_1+1}]$.

\textbf{Subcase 2.1.} $v_{i_2} \not\in V_w$. Then we fall into the same argument as the one used in case 1. 

\textbf{Subcase 2.2.} $v_{i_2} = z_{i_2}$, for some $i_2 \in \{i_1+1, \ldots, k\}$. 

Since $X_{i_2+1}$ has $|w|$ (consecutive) elements, at least one of them must non-isolated. Let $v_{i_2+1}$ be the largest one, and $\mathtt{a}_3$ be the arc incident with it. Proceeding in this manner, we get a sequence of arcs $\mathtt{a}_1, \ldots, \mathtt{a}_r$, with $\mathtt{a}_j = (v_{i_{j-1}+1}, z_{i_j})$, for $j >1$, and $z_{i_r} = z_{k-1}$. Now, $X_k$ has $|w|$ elements, and therefore one of them must be non-isolated. But any arc incident with these elements satisfy $(I)$ and $(II)$, contradiction.

The argument to prove $(3) \Rightarrow (1)$ is analogous.
\end{proof}

\section{The double-perpendicular category}

In this section, we will show that there is a strong relationship between the categories $\sfT$ and $\C_{|w|} (Q)$, where $Q$ is a Dynkin quiver of type $A$.

Let $x \in \ind \sfT$ and denote by $x^{\perp}$ the perpendicular category of $x$ defined by
\[
x^\perp := \bigcap \limits_{i=w}^0 x^{\perp_i}.
\]

It is easy to check that the objects in the subcategory $\ind (\mod (KQ) \cup \Sigma (\mod (KQ)) \cup \ldots \cup \Sigma^{|w|} (\mod (KQ) \setminus \mathcal{I}))$ of $\D^b(Q)$, where $\mathcal{I}$ denotes the set of injective modules, is a fundamental domain for the action of $\tau \Sigma^{|w|+1}$ on $\ind \D^b(Q)$. 

In the proof of the following theorem, we will use the fact that every indecomposable module over $KQ$ is uniquely determined by its simple socle and its composition length, since $KQ$ is a Nakayama algebra. The indecomposable $KQ$-module with length $l$ and socle $S_{a_1}$, which is the simple module at vertex $a_1$ of $Q$, will be denoted by the sequence $(a_l, \ldots, a_2, a_1)$, where $a_i = a_{i-1}+1$, for $i = 2, \ldots, l$.   

\begin{thm}\label{thm:doubleperp}
Let $\mathtt{a} = (t,u)$ be the arc corresponding to $a \in \ind \, \sfT$. Then the arcs corresponding to the indecomposable objects in $a^\perp$ are $\sfC_1 \cup \sfC_2$, where:
\[
\sfC_1 := \{ \text{d-admissible arcs } (x,y) \mid u < y < x < t \},
\]
\[
\sfC_2:= \{ \text{d-admissible arcs } (x,y) \mid u < y < x < t \}.
\]
Let $C_i$ be the set of indecomposable objects of $\sfT$ corresponding to $\sfC_i$, where $i = 1, 2$. Then $\add \, C_2$ is equivalent to $\sfT$ and $\add \, C_1$ is equivalent to $\C_{|w|} (Q)$, where $Q$ is the quiver $n \rightarrow n-1 \rightarrow \cdots \rightarrow 1$ of type $A_n$ with $n = \frac{u-t-1}{d}-1$. 
\end{thm}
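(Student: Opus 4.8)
The plan is to read $a^{\perp}$ off the combinatorics via Lemma \ref{combinatorialExtifree}, and then analyse the two resulting families of arcs separately: the outer family by an explicit collapsing re-coordinatization, and the inner family by a direct comparison with the AR-quiver of the orbit category.

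\emph{Identifying $a^{\perp}$.} By Lemma \ref{combinatorialExtifree}, an indecomposable $y$ lies in $a^{\perp}=\bigcap_{i=w}^{0}a^{\perp_i}$ if and only if $\mathtt{y}$ neither crosses $\mathtt{a}=(t,u)$ nor is incident with $t$ or $u$. These arcs split into those with both endpoints strictly inside $]u,t[$, which is $\sfC_1$, and those with both endpoints in $\mathbb{Z}\setminus\{u,\ldots,t\}$ (lying left of $u$, right of $t$, or straddling $\mathtt{a}$), which is $\sfC_2$. Every arc of $\sfC_1$ is nested inside, and shares no vertex with, every arc of $\sfC_2$, so Lemma \ref{combinatorialExtifree} gives $\Ext^i(C_1,C_2)=\Ext^i(C_2,C_1)=0$ for $i\in\{w,\ldots,0\}$; this is what realises $a^{\perp}$ as a disjoint union and lets me treat $\add C_1$ and $\add C_2$ independently.

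\emph{The equivalence $\add C_2\simeq\sfT$.} The point is that the closed block $\{u,\ldots,t\}$ has $t-u+1=k|d|$ vertices, a multiple of $|d|$, where $k=\frac{u-t-1}{d}$ by admissibility of $\mathtt{a}$. Hence the order-preserving bijection $\psi\colon\mathbb{Z}\setminus\{u,\ldots,t\}\to\mathbb{Z}$ that deletes this block, $\psi(v)=v$ for $v<u$ and $\psi(v)=v-k|d|$ for $v>t$, sends $d$-admissible arcs to $d$-admissible arcs: each endpoint is translated by a multiple of $|d|$, so for a straddling arc $(x,y)$ the new length is $(x-y)-k|d|\equiv x-y\pmod{|d|}$, and both the congruence and the minimal-length conditions survive. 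Since $\psi$ is an order isomorphism onto $\mathbb{Z}$ it preserves crossings, nestings and incidences, hence by Lemma \ref{cruciallemma} it matches every Ext-hammock; as $\sfT$ is determined as a $K$-linear Krull--Schmidt category by this arc combinatorics, $\psi$ induces the equivalence $\add C_2\simeq\sfT$.

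\emph{The equivalence $\add C_1\simeq\C_{|w|}(A_n)$, and the main obstacle.} Here I would compare object lists. Counting inside arcs by length $j|d|-1$ (for $1\le j\le n$, with $n=k-1$) gives $|d|\frac{n(n+1)}2-n$ arcs, which matches the number $|w|\frac{n(n+1)}2+\frac{n(n-1)}2$ of objects in the stated fundamental domain $\ind(\mod KQ\cup\cdots\cup\Sigma^{|w|}(\mod KQ\setminus\mathcal{I}))$, since $|w|=|d|-1$. I would then set up a bijection sending a fundamental-domain module $(a_l,\ldots,a_1)$, or one of its admissible shifts, to an inside arc, reading its socle and length off the two endpoints, and verify that this identifies the AR-quiver of $\C_{|w|}(A_n)$ with the full subquiver of $\sfT$ on $\sfC_1$. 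The hard part is the Hom-space comparison: $\Hom_{\C_{|w|}(A_n)}(X,Y)=\bigoplus_i\Hom_{\D^b(A_n)}(X,(\tau\Sigma^{|w|+1})^iY)$ is an orbit sum, whereas the matching space in $\sfT$ is at most one-dimensional. One must show that for inside arcs exactly one summand survives, i.e. that the periodicity $\tau\Sigma^{|w|+1}=\mathrm{id}$ of the orbit category is mirrored precisely by the truncation to non-injectives in the top row $\Sigma^{|w|}(\mod KQ\setminus\mathcal{I})$ and by the impossibility of inside arcs crossing the endpoints $t,u$. Matching the mesh relations along the two boundary rows of the fundamental domain with the behaviour of the inside arcs nearest $t$ and $u$ is where the care lies; once the AR-quivers and the $0/1$-dimensional Hom-spaces correspond, equivalence of these standard categories follows.
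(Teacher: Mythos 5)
Your overall route coincides with the paper's: read $a^{\perp}$ off Lemma \ref{combinatorialExtifree}, collapse the block $\{u,\ldots,t\}$ (whose size $t-u+1$ is a multiple of $|d|$ by admissibility) to get $\add C_2 \simeq \sfT$, and set up an explicit object dictionary between inside arcs and the fundamental domain to get $\add C_1 \simeq \C_{|w|}(A_n)$. Your first two steps are sound, and in fact slightly more complete than the paper's own text, which dismisses the $C_2$ morphism check as an omitted case-by-case analysis; your mutual Hom-orthogonality of $C_1$ and $C_2$ (nested, vertex-disjoint arcs) and your object count $|d|\frac{n(n+1)}{2}-n$ are both correct. (You also silently repair the typo in the statement: $\sfC_2$ should be the set of admissible arcs with both endpoints outside $[u,t]$.)

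The genuine gap is in the part you yourself flag as ``the hard part'': the equivalence $\add C_1 \simeq \C_{|w|}(A_n)$ is left as a plan, and that plan is exactly where the paper spends essentially all of its effort. The paper writes down the explicit assignment $\Sigma^i(a_l,\ldots,a_1) \mapsto \bigl(t-i-1+(a_1-1)d,\; u-i-1-(n+2-l-a_1)d\bigr)$, records a closed-form criterion for $\Hom_{\C_{|w|}(Q)}(M,N)\neq 0$ in three cases (equal degrees; degrees $i$ and $i+1$; the wrap-around $d(M)=|w|$, $d(N)=0$) --- which disposes of your ``orbit sum'' worry, since for the linearly oriented $A_n$ at most one summand of the orbit formula survives and the criterion is explicit --- and then matches each case against the hammocks of Remark \ref{Homhammocksfountains} by elementary inequalities in the endpoints. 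One step there is genuinely non-routine and missing from your sketch: in the equal-degree case one must rule out $\mathcal{F}(N)\in F^{-}\bigl(S(\mathcal{F}(M))\bigr)$, which the paper does by computing $y'-y''=1+(b_m-a_l+1)d$ and observing this is never of the form $j'd$ with $j'\le 0$ because $d\le -2$. Until the dictionary is written and this three-case verification carried out, the claimed equivalence is unproven; your counting argument only shows the two object lists have equal cardinality. A smaller leap --- that matching $0/1$-dimensional Hom-spaces on objects yields a full and faithful functor, which tacitly uses that compositions correspond --- is one the paper makes as well, so it does not separate your argument from the published one, but it deserves a remark in a complete write-up.
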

\begin{proof}
Note that the indecomposable objects of $x^\perp$ correspond to all the admissible arcs which do not cross nor have a common vertex with $\mathtt{x}$, so the first statement of this theorem is clear. 

Before defining a $K$-linear functor $\mathcal{F}$ from $\C_{|w|} (Q)$ to $\add C_1$, we note that an indecomposable object $X$ lying in the fundamental domain of $\C_{|w|} (Q)$ must be of the form $X = \Sigma^{d(X)} \overline{X}$, where $\overline{X} = (a_l, \ldots, a_2,a_1)$, $d(X)$ denotes the degree of $X$, which lies in $\{0, \ldots, |w|\}$, and $a_l \ne n$ when $i = |w|$.

Given an indecomposable object $X = \Sigma^i (a_l, \ldots, a_2, a_1)$ in $\C_{|w|}$, we define $\mathcal{F}(X)$ to be the indecomposable object corresponding to the arc $(t-i-1+(a_1-1)d,u-i-1-(n+2-l-a_1)d)$, which we denote by $\mathcal{F}(\mathtt{X})$. One can easily check that these arcs lie in $\sfC_1$ and that $\mathcal{F}$ is surjective on the objects. 

It is enough to define $\mathcal{F}$ on the morphisms between indecomposable objects. In order to define the functor on morphisms, it is enough to check that given $M, N \in \ind \, \C_{|w|} (Q)$, $\Hom_{\C_{|w|} (Q)} (M,N) \ne 0$ if and only if $\Hom_{\sfC_1} (\mathcal{F}(M),\mathcal{F}(N)) \ne 0$, since the dimensions of the homomorphism spaces in both categories are either zero or one-dimensional. Let $\overline{M} = (a_l, \ldots, a_2, a_1)$ and $\overline{N} = (b_m, \ldots, b_2, b_1)$. We have $\Hom_{\C_{|w|} (Q)} (M,N) \ne 0$ precisely when $M$ and $N$ satisfy one of the following three conditions:

\begin{enumerate}
\item $d(M) = d(N) = i$ and there is some $1 \leq j \leq l$ such that $(a_l, \ldots, a_j) = (b_{l-j+1}, \ldots, b_1)$.
\item $d(M) = i, d(N) = i+1$, $0 \leq i \leq |w|-1$, $b_1 \leq a_1-1$ and $a_1-1 \leq b_m \leq a_l-1$. 
\item $d(M) = |w|, d(N) = 0$, $b_1 \leq a_1$ and $a_1 \leq b_m \leq a_l$.  
\end{enumerate}

\textbf{Case 1.} $d(M) = d(N) = i$, for some $0 \leq i \leq |w|$. 

Then we claim that the following are equivalent:
\begin{enumerate}[(a)]
\item There is some $1 \leq j \leq l$ such that $(a_l, \ldots, a_j) = (b_{l-j+1}, \ldots, b_1)$.
\item $\mathcal{F}(N) \in F^+ (\mathcal{F}(M))$.
\item $\Hom_{\sfC_1} (\mathcal{F}(M),\mathcal{F}(N)) \ne 0$.
\end{enumerate}

Firstly we prove that (a) $\Longleftrightarrow$  (b). We have $\mathcal{F}(\mathtt{M}) = (t-i-1+(a_1-1)d,u-i-1-(n+2-l-a_1)d) =: (x,y)$ and $\mathcal{F}(\mathtt{N}) = (t-i-1 + (b_1-1) d, u-i-1- (n+2-m-b_1)d) =: (x^\prime, y^\prime)$. Note that, by remark \ref{Homhammocksfountains}, condition (b) holds if and only if $x, y, x^\prime$ and $y^\prime$ satisfy the following:

\begin{enumerate}[(i)]
\item $x^\prime - x = j^\prime d$, for some $j^\prime \geq 0$. 
\item $x^\prime \geq y - w$.
\item $y^\prime \leq y$. 
\end{enumerate}

We have $x^\prime - x = (b_1-a_1) d$, and $j^\prime := b_1 - a_1  \geq 0$ if and only if $b_1 \geq a_1$. On the other hand, since $n = \frac{u-t-1}{d}-1$, we have $x^\prime - y +w = (b_1-a_l) d \geq 0$ if and only if $b_1 \leq a_l$. Hence (i) and (ii) is equivalent to having $a_1 \leq b_1 \leq a_m$. Finally, we have $y^\prime-y = (m-l+a_j-a_1) d = (b_m-a_l) d$, since $l = a_l-a_1+1$ and $m = b_m-b_1+1$. So, $y^\prime-y \leq 0$ if and only if $b_m \geq a_l$. We can then conclude that conditions (i), (ii) and (iii) are equivalent to (a), which proves that (a) $\Longleftrightarrow$ (b). 

Now, suppose (c) holds. Then $\mathcal{F}(N) \in F^+ (\mathcal{F}(M)) \cup F^-(S(\mathcal{F}(M)))$. Assume, for a contradiction, that $\mathcal{F}(N) \in F^-(S(\mathcal{F}(M)))$. Note that $S(\mathcal{F}(M))$ is the indecomposable object corresponding to the arc $(t-i-2+(a_1-2)d,u-i-2-(n+3-l-a_1)d) =: (x^{\prime\prime},y^{\prime\prime})$. Then, in particular, we must have $y^\prime-y^{\prime\prime} = j^\prime d$, for some $j^\prime \leq 0$, by remark \ref{Homhammocksfountains}. However, $y^\prime -y^{\prime \prime} = 1 + (b_m-a_l+1)d$ which cannot be written in the form $j^\prime d$ for some $j^\prime \leq 0$, since $d \leq -2$. This shows (c) $\implies$ (b) and the converse is trivial. Therefore, in case 1, we have that $\Hom_{\C_{|w|} (Q)} (M,N) \ne 0$ is equivalent to $\Hom_{\sfC_1} (\mathcal{F}(M),\mathcal{F}(N)) \ne 0$.

\textbf{Case 2.} $d(M) = i, d(N) = i+1$, for some $0 \leq i \leq |w|-1$.

Using similar arguments to the ones used in case 1, one can show that the following are equivalent:
\begin{enumerate}[(a)]
\item $b_1 \leq a_1-1$ and $a_1-1 \leq b_m \leq a_l-1$.
\item $\mathcal{F}(N) \in F^- (S(\mathcal{F}(M)))$.
\item $\Hom_{\sfC_1} (\mathcal{F}(M),\mathcal{F}(N)) \ne 0$.
\end{enumerate}

As we have mentioned above, (a) is equivalent to $\Hom_{\C_{|w|} (Q)} (M,N) \ne 0$, and so we are done in this case. The remaining case is the following:

\textbf{Case 3.}  $d(M) = |w|$, $\Sigma^w M $ is not an injective module and $d(N) = 0$.

Then one can easily check that the following are equivalent:
\begin{enumerate}[(a)]
\item $b_1 \leq a_1$ and $a_1 \leq b_m \leq a_l$.
\item $\mathcal{F}(N) \in F^- (S(\mathcal{F}(M)))$.
\item $\Hom_{\sfC_1} (\mathcal{F}(M),\mathcal{F}(N)) \ne 0$.
\end{enumerate}

Therefore, we have that $\Hom_{\C_{|w|} (Q)} (M,N) \ne 0$ is equivalent to $\Hom_{\sfC_1} (\mathcal{F}(M),\mathcal{F}(N)) \ne 0$. 

It is easy to check that $\mathcal{F}$ is indeed a functor, which is full and faithful, which thus gives an equivalence between $\add \sfC_1$ and $\C_{|w|} (Q)$, as desired. 


An equivalence between $\add \sfC_2$ and $\sfT$ can be clearly induced from the bijection between $\mathbb{Z} \cap (] - \infty, u-1[ \, \cup \, ]t+1, + \infty[)$ and $\mathbb{Z}$ described as follows: an object in $\mathbb{Z} \cap (] - \infty, u-1[ \, \cup \, ]t+1, + \infty[)$ is either of the form $t+i$ or of the form $u-i$, for some $i \geq 1$. The bijection is then given by: $t+i \mapsto i-1$, $u-i \mapsto -i$. One can easily do a case-by-case analysis to check that the induced bijection on the objects behaves well with morphisms.
\end{proof}

\begin{ex}
Let $w = -1$, $(t,u) = (3,-4)$ and $Q = 3 \rightarrow 2 \rightarrow 1$. Then the AR-quiver of $\C_1(Q)$ is as follows:
\[
\xymatrix@C=0.2cm{& & (1,2,3) \ar@{->}[dr] & & \Sigma (1) \ar@{->}[dr] & & \Sigma (2) \ar@{->}[dr] & & (1,2,3) \\
& (1,2) \ar@{->}[dr]\ar@{->}[ur] & & (2,3) \ar@{->}[dr]\ar@{->}[ur] & & \Sigma (1,2) \ar@{->}[dr]\ar@{->}[ur] & & (1,2) \ar@{->}[ur] \\
(1) \ar@{->}[ur] & & (2) \ar@{->}[ur] & & (3) \ar@{->}[ur] & & (1) \ar@{->}[ur]}
\]
The functor $F$ maps the AR-quiver above to the AR-quiver of $\add C_1$ which is given by:
\[
\xymatrix@C=0.1cm{& & (2,-3) \ar@{->}[dr] & & (1,0) \ar@{->}[dr] & & (-1,-2) \ar@{->}[dr] & & (2,-3) \\
& (2,-1) \ar@{->}[dr]\ar@{->}[ur] & & (0,-3) \ar@{->}[dr]\ar@{->}[ur] & & (1,-2) \ar@{->}[dr]\ar@{->}[ur] & & (2,-1) \ar@{->}[ur] \\
(2,1) \ar@{->}[ur] & & (0,-1) \ar@{->}[ur] & & (-2,-3) \ar@{->}[ur] & & (2,1) \ar@{->}[ur]}
\]
\end{ex}

\section{Geometric model of $\C_m (A_n)$}

Let $n, m$ be positive integers and $Q$ be a Dynkin quiver of type $A_n$. We can assume that $Q$ has the following linear orientation $n \rightarrow n-1 \rightarrow \cdots \rightarrow 2 \rightarrow 1$. In this section, we give a geometric model for the orbit categories $\C_m (Q)=: \C_m(A_n)$. This model is based on the equivalence of categories given in theorem \ref{thm:doubleperp}. Indeed, given an arc $(t,u) \in \sfT$ and $n = \frac{u-t-1}{d}-1$, we can view the vertices in $]u,t[$ as vertices of a $(n+1)(|w|+1)-2$-gon. Each pair $\{u+1,u+2\}, \{u+2,u+3\}, \ldots, \{t-2,t-1\}$ and $\{t-1,u+1\}$ is viewed as an edge of the polygon. Each admissible arc of the $\infty$-gon lying in $\sfC_1$ can thus be seen as a diagonal of the polygon which divides it into two polygons whose number of vertices is divisible by $|d|$. Some of these diagonals can be edges of the polygon, when $w = -1$, in which case, a $2-$gon is considered a polygon.

We note that a different geometric model for $\C_1 (A_n)$ was given in \cite{CS2}. It is also interesting to note that the geometric model we will present here shares similarities with the geometric model for the higher cluster category of type $A$ introduced in \cite{BaurMarsh}. The results in this section can be easily proved using the same method as that in \cite{BaurMarsh}, and so the proofs will be omitted.

Let $\P_{n,m}$ be a regular $N$-gon, where $N = (n+1)(m+1)-2$, with vertices numbered clockwise from $1$ to $N$. All operations on vertices of $\P_{n,m}$ will be done modulo $N$. 

We define the stable translation quiver $\Gamma (n,m)$ as follows. The vertices of $\Gamma (n,m)$ are diagonals of $\P_{n,m}$ which divide $\P_{n,m}$ into two polygons whose number of vertices is divisible by $m+1$. We call these diagonals \textit{$(m+1)$-diagonals}. We denote a vertex of $\Gamma(n,m)$ by $\{i,j\}$ (or simply by $i,j$), where $i$ and $j$ are vertices of $\P_{n,m}$. The arrows of $\Gamma (n,m)$ are obtained in the following way: given two $(m+1)$-diagonals $D, D^\prime$ with a vertex $i$ in common, let $j$ and $j^\prime$ be the other vertices of $D$ and $D^\prime$ respectively. Then, there is an arrow from $D$ to $D^\prime$ in $\Gamma (n,m)$ if and only if $D^\prime$ can be obtained from $D$ by rotating clockwise $m+1$ steps around $i$.

Finally we define an automorphism $\tau: \Gamma (n,m) \rightarrow \Gamma (n,m)$ as follows: given an $(m+1)$-diagonal ${i,j}$, $\tau ({i,j}):= \{i-m-1, j-m-1\}$.

\begin{lemma}
The pair $(\Gamma (n,m), \tau)$ is a stable translation quiver.
\end{lemma}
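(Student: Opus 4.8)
The plan is to verify directly that the pair $(\Gamma(n,m), \tau)$ satisfies the definition of a stable translation quiver. Recall that this requires three things: first, that $\tau$ is a bijection on the vertex set; second, that $\tau$ has no fixed points among the vertices (so the quiver is stable); and third, the key mesh condition, namely that for every vertex $D$, the number of arrows from $D'$ to $D$ equals the number of arrows from $\tau(D)$ to $D'$, so that arrows into $D$ are in bijection with arrows out of $\tau(D)$ via $\tau$. Since everything here is combinatorially explicit in terms of $(m+1)$-diagonals of the regular $N$-gon $\P_{n,m}$ with $N = (n+1)(m+1)-2$, each point reduces to a direct check.

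First I would check that $\tau$ is well-defined and bijective. Since $\tau(\{i,j\}) = \{i-m-1, j-m-1\}$ is just simultaneous clockwise rotation of both endpoints by $m+1$ steps (modulo $N$), it clearly sends diagonals to diagonals and preserves the partition of the remaining vertices into two arcs, hence preserves the property of being an $(m+1)$-diagonal; its inverse is rotation by $-(m+1)$, so it is a bijection. For stability I would argue that $\tau$ has no fixed points: a fixed diagonal would force $m+1 \equiv 0 \pmod N$ (or an exchange $i \mapsto j$, $j \mapsto i$ which one rules out by a parity/length count since $\gcd$ considerations and the constraint $N=(n+1)(m+1)-2$ prevent $m+1$ from being a half-period), and since $0 < m+1 < N$ this is impossible.

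The main work, and the step I expect to be the principal obstacle, is the mesh (translation) condition relating arrows into $D$ with arrows out of $\tau(D)$. Here I would fix an $(m+1)$-diagonal $D=\{i,j\}$ and analyze the arrows ending at $D$: by the arrow rule, an arrow $D' \to D$ comes from rotating $D'$ clockwise by $m+1$ around a shared vertex to reach $D$, so I would enumerate, endpoint by endpoint, the predecessors of $D$, keeping careful track of the two distinct local configurations at $i$ and at $j$ and of the degenerate cases near the boundary of the polygon where one of the two candidate diagonals fails to be a genuine $(m+1)$-diagonal. I would then show that applying $\tau$ turns each such incoming arrow $D' \to D$ into an outgoing arrow $\tau(D) \to \tau(D')$, establishing the required bijection, which is exactly the mesh relation. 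The delicate part is the bookkeeping at shared endpoints and the boundary/degenerate diagonals, where an arrow may be present on one side of the mesh but the partner vertex coincides with $\tau(D)$ itself.

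Since, as the excerpt notes, $(\Gamma(n,m),\tau)$ is built to mirror the AR-structure transported through the equivalence of Theorem~\ref{thm:doubleperp} (and the construction parallels that of \cite{BaurMarsh}), I would organize the verification so that the rotation-by-$m+1$ arrow rule and the translate $\tau$ correspond respectively to the irreducible maps and the AR-translate in $\add\,\sfC_1 \simeq \C_{|w|}(Q)$; then the mesh condition for $\Gamma(n,m)$ follows from the known AR-quiver structure of $\C_{|w|}(A_n)$, and the combinatorial check above merely confirms that the geometric rotation model faithfully reproduces it. This reduces the novel content to the two-line bijection argument for $\tau$ together with the local arrow count, and the remainder follows the method of \cite{BaurMarsh}, which is why the proof may reasonably be stated concisely.
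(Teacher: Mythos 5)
Your proposal is correct and takes essentially the paper's route: the paper omits the proof outright, saying the results follow by the method of \cite{BaurMarsh}, and that method is exactly your direct check --- $\tau$ is rotation by $m+1$ steps, hence a bijection on $(m+1)$-diagonals, and the predecessors $\{i,j-(m+1)\}$ and $\{j,i-(m+1)\}$ of $D=\{i,j\}$ are literally the same diagonals as the successors of $\tau(D)=\{i-m-1,j-m-1\}$, so the mesh condition holds with no delicate boundary bookkeeping, since validity of each candidate diagonal is the same condition on both sides of the mesh. One small remark: fixed-point-freeness of $\tau$ is not part of the standard definition of a stable translation quiver (``stable'' only requires $\tau$ to be everywhere defined and bijective), so that extra verification, while harmless, is not needed.
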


\begin{prop}
The Auslander-Reiten quiver of $\C_m(A_n)$ is isomorphic to $\Gamma (n,m)$.
\end{prop}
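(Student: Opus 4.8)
The plan is to establish the isomorphism of stable translation quivers by exploiting the equivalence already proven in Theorem \ref{thm:doubleperp}, namely that $\add \sfC_1 \simeq \C_{|w|}(Q)$ for $Q$ of type $A_n$. Since the Auslander-Reiten quiver is a triangle-equivalence invariant, it suffices to identify the AR-quiver of $\add \sfC_1$ with $\Gamma(n,m)$, where $m = |w|$. Recall from Theorem \ref{thm:doubleperp} that $\sfC_1$ consists of the $d$-admissible arcs $(x,y)$ with $u < y < x < t$, and that the vertices in the open interval $]u,t[$ are precisely the $N = (n+1)(|w|+1)-2$ vertices of the polygon $\P_{n,m}$. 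The first step is therefore to set up the explicit bijection on vertices: each arc $(x,y) \in \sfC_1$ corresponds to a diagonal $\{i,j\}$ of $\P_{n,m}$, where $i,j$ are the labels in $\{1,\ldots,N\}$ assigned clockwise to the vertices of $]u,t[$.

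Next I would verify that this vertex bijection sends $d$-admissible arcs to $(m+1)$-diagonals. An arc $(x,y)$ is $d$-admissible precisely when the number of vertices in $[y,x]$ is divisible by $|d| = m+1$; under the identification this translates exactly into the defining condition that the diagonal $\{i,j\}$ splits $\P_{n,m}$ into two sub-polygons each having a number of vertices divisible by $m+1$. I would check that both sub-polygon conditions hold simultaneously, using $N+2 = (n+1)(m+1)$ so that if one side has a multiple of $m+1$ vertices then so does the other. With the vertex correspondence confirmed to respect admissibility, the remaining task is to match the arrows and the translation. For the arrows, I would show that the irreducible maps in $\sfT$ (equivalently in $\add \sfC_1$), which correspond to the AR-quiver arrows depicted in the $\mathbb{Z}A_\infty$ components, translate under the bijection into exactly the clockwise rotation by $m+1$ steps around a common endpoint; concretely, the AR-quiver arrow $(t,u) \to (t-1,u)$ or $(t,u)\to (t,u+1)$ on one endpoint shifts the other endpoint by one polygon-step, and admissibility forces the effective rotation to be $m+1$ steps in $\P_{n,m}$. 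Finally, the AR-translate $\tau(x,y) = \Sigma^d(x,y) = (x-d,y-d) = (x+(m+1),y+(m+1))$ corresponds precisely to the polygon automorphism $\{i,j\}\mapsto\{i-m-1,j-m-1\}$ defining $\tau$ on $\Gamma(n,m)$, once one accounts for the orientation convention.

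The main obstacle I anticipate is the bookkeeping at the boundary, that is, correctly handling the identification of the edge $\{t-1,u+1\}$ and the wrap-around arithmetic modulo $N$. The arcs in $\sfC_1$ live in the $\infty$-gon where there is no wrap-around, whereas $\P_{n,m}$ is a genuine polygon with all operations taken modulo $N$; the passage between the linearly ordered interval $]u,t[$ and the cyclic vertex set of $\P_{n,m}$ is exactly where the orbit-category relation $\tau\Sigma^{|w|+1} = \mathrm{id}$ gets encoded geometrically, and one must confirm that arcs differing by this relation map to the same polygon diagonal. I would resolve this by carefully fixing the clockwise labelling and checking that the two endpoints $t-1$ and $u+1$ become adjacent polygon vertices, so that the single boundary edge closes up the cycle consistently; the rest is then a routine verification that arrows and translation transport correctly, which, as the paper notes, follows the same method as in \cite{BaurMarsh} and may be left to the reader.
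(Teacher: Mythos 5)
Your overall strategy---transporting the statement through Theorem \ref{thm:doubleperp} and matching arcs in $\sfC_1$ with $(m+1)$-diagonals of $\P_{n,m}$---is reasonable, and your vertex-level bookkeeping is correct: the interval $]u,t[$ has $N=(n+1)(m+1)-2$ integers, $d$-admissibility of $(x,y)$ translates into divisibility by $m+1$ on both sides of the diagonal (using $N+2=(n+1)(m+1)$), and $\tau = \Sigma^d$ matches the rotation $\{i,j\}\mapsto\{i-m-1,j-m-1\}$. But there is a genuine gap at the arrows, precisely where you write that the irreducible maps in $\sfT$ are ``equivalently'' the irreducible maps in $\add\,C_1$. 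Irreducibility is not inherited by additive subcategories, and here the discrepancy is not a technicality but the heart of the statement. Take the paper's own example: $w=-1$, $\mathtt{a}=(3,-4)$, $n=3$. The AR-quiver of $\add\,C_1$ contains the arrow $(-1,-2)\to(2,-1)$ (one of the wrap-around arrows of $\Gamma(3,1)$), yet in $\sfT$ the only AR-quiver arrows out of $(-1,-2)$ are of the form $(x,y)\to(x+d,y)$ and $(x,y)\to(x,y+d)$, landing outside $\sfC_1$; the nonzero map $(-1,-2)\to(2,-1)$ lives in the Serre-dual hammock $F^-(S(-1,-2))$ and is \emph{not} irreducible in $\sfT$. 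So reading arrows off the $\mathbb{Z}A_\infty$ components yields only the arrows interior to the fundamental domain and misses exactly the arrows crossing the seam $\{t-1,u+1\}$ (and, conversely, gives you no argument that nothing else becomes irreducible in the subcategory). Relatedly, your proposed resolution of the boundary issue---``arcs differing by the orbit relation map to the same polygon diagonal''---misdiagnoses the problem: the arcs in $\sfC_1$ already form a fundamental domain, one arc per isomorphism class, so no identification of vertices is needed; what requires proof is which \emph{morphisms} become irreducible in $\add\,C_1$. (A smaller slip: the arrows of $\sfT$ are not $(t,u)\to(t-1,u)$ or $(t,u)\to(t,u+1)$; shifting an endpoint by one step does not even preserve admissibility. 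Arrows shift an endpoint by $d=-(m+1)$ directly.)

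For comparison, the paper deliberately omits the proof and points to the method of Baur and Marsh: one first invokes the standard description of the AR-quiver of the orbit category $\C_m(A_n)=\D^b(A_n)/\tau\Sigma^{m+1}$ as the quotient translation quiver $\mathbb{Z}A_n/\langle\tau\Sigma^{m+1}\rangle$ (Keller's theorem gives the triangulated structure, and AR-triangles descend along the covering), and then exhibits an explicit isomorphism of stable translation quivers between this quotient and $\Gamma(n,m)$---a purely combinatorial verification of vertices, arrows and translation that never refers to $\sfT$ and hence never meets the subcategory-irreducibility issue. Your route can be salvaged along the same lines: first establish that the AR-quiver of $\C_m(A_n)$ is the quotient of $\mathbb{Z}A_n$, then transport its meshes through the explicit functor $\mathcal{F}$ of Theorem \ref{thm:doubleperp} (whose formula on objects you already have) and check that arrows go to clockwise rotations by $m+1$ steps; note that the wrap-around arrows correspond exactly to the Case 2 and Case 3 Hom-spaces computed in the proof of Theorem \ref{thm:doubleperp}. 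Either way, the step you cannot skip is an independent determination of the AR-quiver of $\C_m(A_n)$ (or of $\add\,C_1$ as a category in its own right); it cannot be read off from irreducible maps in the ambient category $\sfT$.
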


\begin{cor}
The orbit category $\C_m(A_n)$ is equivalent to the additive hull of the mesh category of $\Gamma (n,m)$. 
\end{cor}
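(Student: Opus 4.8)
The plan is to combine the preceding Proposition, which identifies the Auslander--Reiten quiver of $\C_m(A_n)$ with the stable translation quiver $\Gamma(n,m)$, with the general principle that a \emph{standard} Krull--Schmidt category is recovered from its Auslander--Reiten quiver as the additive hull of the associated mesh category. Since the mesh category depends only on the isomorphism type of the underlying stable translation quiver (which we know is genuinely a stable translation quiver by the preceding Lemma), transporting along the isomorphism of the Proposition will then yield the claimed equivalence at once. This is precisely the strategy used in \cite{BaurMarsh} for the higher cluster categories of type $A$, and it transfers to the present orbit categories with only cosmetic changes, which is why the paper can defer to that method.

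First I would note that $\C_m(A_n) = \D^b(Q)/\tau\Sigma^{m+1}$ is a $K$-linear, Hom-finite, Krull--Schmidt category whose indecomposables are those lying in the fundamental domain described before Theorem \ref{thm:doubleperp}. The essential input is that $\C_m(A_n)$ is standard. For this I would first recall the classical fact that $\D^b(Q)$, with $Q$ of Dynkin type $A_n$, is itself standard: its Auslander--Reiten quiver is $\ZZZ A_n$, and because $A_n$ is simply laced and $K$ is algebraically closed there are no non-standard phenomena, so $\D^b(Q)$ is equivalent to the additive hull of the mesh category of $\ZZZ A_n$ (the representation-finite situation underlying the classification of Riedtmann \cite{Riedtmann}). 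I would then descend this along the canonical covering functor $\D^b(Q) \to \C_m(A_n)$: the autoequivalence $\tau\Sigma^{m+1}$ acts freely on the vertices of $\ZZZ A_n$, so the associated group action is admissible, and the orbit category of a standard category by such an action is again standard, with Auslander--Reiten quiver the quotient translation quiver $\ZZZ A_n / \langle \tau\Sigma^{m+1}\rangle$.

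Combining these two facts gives that $\C_m(A_n)$ is equivalent to the additive hull of the mesh category of its own Auslander--Reiten quiver; feeding in the isomorphism of stable translation quivers furnished by the Proposition, together with the invariance of the mesh category under such isomorphisms, produces the equivalence $\C_m(A_n) \simeq \add(\text{mesh category of } \Gamma(n,m))$. The main obstacle is the standardness step, and in particular verifying that $\tau\Sigma^{m+1}$ acts freely enough on the indecomposables of $\D^b(Q)$ so that the quotient of the mesh category of $\ZZZ A_n$ coincides with the mesh category of the quotient translation quiver; once this admissibility is confirmed, the remainder of the argument is purely formal and follows \cite{BaurMarsh} verbatim.
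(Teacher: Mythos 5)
Your proposal is correct and follows essentially the route the paper intends: the paper omits the proof, explicitly deferring to the method of \cite{BaurMarsh}, and that method is exactly what you spell out — standardness of $\D^b(Q)$ for Dynkin $Q$, descent along the covering to the orbit category via the admissible (free) action of $\tau\Sigma^{m+1}$ on $\ZZZ A_n$, and transport through the isomorphism of stable translation quivers from the preceding Proposition. Your identification of the admissibility of the group action as the one point requiring verification matches where the real content of the Baur--Marsh argument lies.
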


\begin{ex}
In the case when $n = 3$ and $m = 2$, $\P_{3,2}$ is a 10-gon and $\Gamma (3,2)$ looks as follows:
\[
\xymatrix@C=0.1cm{&                       & 1,9 \ar[dr]         &                       & 2,4 \ar[dr]       &                      & 5,7 \ar[dr]
&                      & 8,10 \ar[dr]       &                     & 1,3 \ar[dr] \\
                      & 1,6 \ar[dr]\ar[ur]  &                       & 4,9 \ar[dr]\ar[ur] &                      & 2,7 \ar[dr]\ar[ur] &
& 5,10 \ar[dr]\ar[ur] &                     & 3,8 \ar[dr]\ar[ur] &                          & 1,6 \ar[dr] \\
1,3 \ar[ur] &                       & 4,6 \ar[ur]  &                       & 7,9 \ar[ur] &                      & 2,10 \ar[ur]
&                      & 3,5 \ar[ur] &                     & 6,8 \ar[ur] &                     & 1,9}
\]
\end{ex}

\begin{thm}\label{combinatorial_presentation_hom_configs}
The $m-\Hom$-configurations in $\C_m(A_n)$ are in one-to-one correspondence with sets of $n$ noncrossing $(m+1)$-diagonals of an $N$-gon with no vertex in common. 
\end{thm}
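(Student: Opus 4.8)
The plan is to argue entirely inside $\C_m(A_n)$ using its geometric model $\Gamma(n,m)$, rather than transporting Theorem \ref{Homconfigclassification} through the equivalence of Theorem \ref{thm:doubleperp}. The latter cannot be done naively: under $\mathcal F$ the shift of $\C_m(A_n)$ corresponds to a \emph{cyclic} rotation of the vertices of $]u,t[$, not to the shift of $\sfT$ (indeed $\tau=\Sigma^{-(m+1)}$ is rotation by $m+1$ steps in $\Gamma(n,m)$, so $\Sigma$ is rotation by a single step), so the $\Ext$-hammocks wrap around the polygon and the combinatorial conditions (2) and (3) of Theorem \ref{Homconfigclassification} do not survive the passage to the orbit category. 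First I would unwind the definition: writing $w=-m$, a collection $\sfH$ of indecomposables is an $m$-$\Hom$-configuration precisely when it is maximal among collections with $\Ext^i_{\C_m(A_n)}(x,y)=0$ for all distinct $x,y\in\sfH$ and all $i\in\{w,\dots,0\}$ (the degrees $0$ and $w$ contribute only the unavoidable self-extensions, since $\Ext^0(h,h)\ne 0$ and $\Ext^w(h,h)\cong D\Hom(h,h)\ne 0$ by Calabi--Yau duality). Via the isomorphism of the AR-quiver of $\C_m(A_n)$ with $\Gamma(n,m)$, indecomposables are identified with $(m+1)$-diagonals of $\P_{n,m}$.

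The key step is the analogue of Lemma \ref{combinatorialExtifree} in $\C_m(A_n)$: for indecomposables $x,y$ with corresponding $(m+1)$-diagonals $\mathtt x,\mathtt y$, one has $\Ext^i_{\C_m(A_n)}(x,y)=0$ for every $i\in\{w,\dots,0\}$ if and only if $\mathtt x$ and $\mathtt y$ neither cross nor share an endpoint in $\P_{n,m}$. To establish this I would use $\Ext^i(x,y)=\Hom_{\C_m(A_n)}(x,\Sigma^i y)$, reading nonvanishing off the mesh category of $\Gamma(n,m)$ after rotating $\mathtt y$ by $i$ steps. As $i$ runs through $\{w,\dots,0\}=\{-m,\dots,0\}$, the rotated copies $\Sigma^i\mathtt y$ sweep out exactly the band of positions for which a nonzero morphism forces $\mathtt y$ to meet or cross $\mathtt x$, and the converse is checked directly from the Hom-hammock. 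This is the main obstacle: the nonvanishing rule for $\Hom$ in the mesh category of $\Gamma(n,m)$ must be made explicit and the wrap-around controlled so that precisely the geometric crossings and incidences are detected across all $m+1$ relevant degrees. I would model this hammock analysis on the corresponding computation in \cite{BaurMarsh}.

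Granting this criterion, an $m$-$\Hom$-configuration is exactly a maximal set of pairwise non-crossing $(m+1)$-diagonals with no common vertex, and it remains to show that \emph{maximal} is equivalent to \emph{of size $n$}. A set of $k$ such diagonals subdivides $\P_{n,m}$ into $k+1$ cells; since no two diagonals share a vertex, every vertex is an endpoint of at most one diagonal, so there are $2k$ endpoints and $N-2k$ isolated vertices. I would prove two inequalities: first, that one can never fit more than $n$ pairwise non-crossing, vertex-disjoint $(m+1)$-diagonals, so $k\le n$; and second, that if $k<n$ then some cell still contains two isolated vertices joined by an admissible $(m+1)$-diagonal of $\P_{n,m}$, contradicting maximality. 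Both are divisibility arguments exploiting that an $(m+1)$-diagonal splits $\P_{n,m}$ into two polygons whose numbers of vertices are divisible by $m+1$ (equivalently, its endpoints differ by $-1$ modulo $m+1$), carried out cell by cell exactly as in the $(m+2)$-angulation count of \cite{BaurMarsh}.

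Together these force $k=n$ for every maximal configuration and, conversely, force every family of $n$ pairwise non-crossing vertex-disjoint $(m+1)$-diagonals to be maximal. The bijection asserted in the theorem is then simply the identification of indecomposable objects with $(m+1)$-diagonals, restricted to these configurations; naturality of this identification with respect to the mesh structure of $\Gamma(n,m)$ makes the correspondence one-to-one.
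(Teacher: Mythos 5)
Your proposal is correct, and it takes a genuinely different route from the paper. Interestingly, the paper does exactly the transport through Theorem~\ref{thm:doubleperp} that you set aside as impossible to do naively --- and your caveat identifies precisely where the printed proof spends its effort: it takes $\mathtt{a}=(N+1,0)$, $w=-m$, defines $G:\Gamma(n,m)\rightarrow\sfC_1$ by $\{i,j\}\mapsto(N+1-i,N+1-j)$, and then verifies by a case-by-case analysis that for each $i\in\{0,\ldots,m\}$ one has $\Hom_{\C_m(A_n)}(\Sigma^i x,y)\neq 0$ if and only if $\Hom_{\sfT}(\Sigma^i(G(x)),G(y))\neq 0$, where the left-hand shift is that of the orbit category and the right-hand shift is that of $\sfT$. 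This degreewise compatibility is exactly the repair for the failure of the embedding to be triangulated (the subcategory $\add C_1$ is not closed under the shift of $\sfT$), after which Lemma~\ref{combinatorialExtifree} turns pairwise compatibility into ``noncrossing with no common vertex,'' and the paper finishes by showing maximality is equivalent to cardinality $n$, by induction on $n$. You instead stay entirely inside $\C_m(A_n)$: you reprove the analogue of Lemma~\ref{combinatorialExtifree} by a hammock computation in the mesh category of $\Gamma(n,m)$, with the cyclic wrap-around of $\Sigma$ controlled as in \cite{BaurMarsh}, and you replace the induction by a cell-by-cell Euler-count and divisibility argument. The two routes defer verifications of comparable calibre (the paper's unwritten case-by-case Hom comparison versus your unwritten hammock analysis), so your plan is no less complete than the printed proof; what yours buys is independence from Theorem~\ref{thm:doubleperp} and a self-contained treatment of the geometric model, while the paper's buys reuse of the arc combinatorics already established in $\sfT$. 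One detail you should make explicit: your unwinding of the definition as ``maximal set of pairwise $\Ext^{\{w,\ldots,0\}}$-compatible indecomposables'' tacitly uses that $\Ext^i_{\C_m(A_n)}(h,h)=0$ for every indecomposable $h$ and every intermediate degree $i\in\{w+1,\ldots,-1\}$, the orbit-category analogue of the remark following Lemma~\ref{cruciallemma}; this is true and quick to check in $\D^b(A_n)$ (the orbit Hom-spaces $\Hom_{\D^b(A_n)}(h,\tau^p\Sigma^{(m+1)p+i}h)$ all vanish in that range, since for modules over a hereditary algebra nonzero maps to $\Sigma^k$ of a module force $k\in\{0,1\}$, and the wrapping through projectives never produces such a $k$ for $w+1\le i\le -1$), but without it your reduction is not literally the definition of an $m$-$\Hom$-configuration.
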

\begin{proof}
Let $\sfC_1$ be as in theorem \ref{thm:doubleperp}, with $\mathtt{a} = (N+1,0)$ and $w = -m$. Clearly, the map $G: \Gamma(m,n) \rightarrow \sfC_1$, $\{i,j\} \mapsto (N+1-i,N+1-j)$, induces a bijection between $\C_m (A_n)$ and $\sfC_1$. One can check, doing a case-by-case analysis, that given $i \in \{0, \ldots, m\}$, we have $\Hom_{\C_m(A_n)} (\Sigma^i x, y) \ne 0$ if and only if $\Hom_{\sfT} (\Sigma^i (G(x)), G(y)) \ne 0$. Hence, it remains to check that a set $\sfH$ of noncrossing $(m+1)$-diagonals with no common vertex is maximal with respect to these properties if and only if it has cardinality $n$. This fact can be easily proved by induction on $n$.
\end{proof}

\subsection{Classical noncrossing partitions and the case when $m = 1$:}\label{subsection:ncp}

In this subsection we restrict to the particular case where $m = 1$, and we compare this geometric model with the already existing one in \cite{CS2}. As a consequence, we will see the relationship between the two different combinatorial presentations of Hom-configurations in $\C_1 (A_n)$: the one given in theorem \ref{combinatorial_presentation_hom_configs} and the one given in \cite{CS2}.

First, let us recall the geometric model presented in \cite{CS2}. In that paper, the author considers oriented edges between vertices of a regular $n$-gon $\P_n$. Boundary edges and loops are included, and the edge oriented from $i$ to $j$ is denoted by $[i,j]$, or simply by $ij$.

Let $\Gamma^\prime (n) = \Gamma^\prime$ be the quiver defined as follows: the vertices are the set of all possible oriented edges between vertices of $\P_n$, including loops. The arrows are of the form $[i,j] \rightarrow [i+1,j]$, for $j \ne i+1$, and $[i,j] \rightarrow [i,j+1]$, for $i \ne j$, where $i+1, j+1$ are taken modulo $n$. This quiver is a stable translation quiver, with translation $\tau$ given by rotating edges in $\P_n$ through $2 \pi /n$ anticlockwise; in other words $\tau ([i,j]) = [i-1, j-1]$.

Without loss of generality, we assume that the bijection between $\Gamma^\prime$ and the AR-quiver of $\C_1(A_n)$ maps the simple module $S_1$ to the oriented arc $12$ (note that this defines the bijection completely). We can also assume, without loss of generality, that the bijection between $\C_1 (A_n)$ and $\Gamma (n,1)$ is such that the simple module $S_1$ is mapped to the arc $\{1,2\}$. 

\begin{remark}\label{thetwogeometricmodels}
It is easy to see that the map from $\Gamma^\prime_0$ to $\Gamma (n,1)_0$ defined by $ij \mapsto \{(2 (i-1 \mod n) +1) \mod 2n, (2 (j-1 \mod n) \mod 2n)\}$ induces an isomorphism of stable translation quivers. 
\end{remark}

\begin{ex}
Here is the isomorphism for when $n = 3$:
\[
\begin{tabular}{l c l}
\xymatrix@C=0.2cm{&                       & 11 \ar[dr]         &                       & 22 \ar[dr]       &                      & 33 \ar[dr]
&                      & 11\\
                      & 13 \ar[dr]\ar[ur]  &                       & 21 \ar[dr]\ar[ur] &                      & 32 \ar[dr]\ar[ur] &
& 13 \ar[ur] \\
12 \ar[ur] &                       & 23 \ar[ur]  &                       & 31 \ar[ur] &                      & 12 \ar[ur]} & \begin{tabular}{c} \\ \\ \\ \\ \\ $\mapsto$ \\ \end{tabular} & 
 \hspace*{-0.5cm} \xymatrix@C=0.1cm{&                       & 1,6 \ar[dr]         &                       & 2,3 \ar[dr]       &                      & 4,5 \ar[dr]
&                      & 1,6\\
                      & 1,4 \ar[dr]\ar[ur]  &                       & 3,6 \ar[dr]\ar[ur] &                      & 2,5 \ar[dr]\ar[ur] &
& 1,4 \ar[ur] \\
1,2 \ar[ur] &                       & 3,4 \ar[ur]  &                       & 5,6 \ar[ur] &                      & 1,2 \ar[ur]}
\end{tabular}
\]
\end{ex}

Hom-configurations in $\C_1(Q)$, where here $Q$ is any Dynkin quiver, are in bijection with positive noncrossing partitions \cite{CS1} (see also \cite{BRT} and \cite{Riedtmann}). In the case when $Q$ is of type $A_n$, positive noncrossing partitions are in one-to-one correspondence with classical noncrossing partitions of the set $\{1, \ldots, n\}$. These combinatorial objects, introduced by Kreweras \cite{Kreweras}, are defined to be partitions $\P = \{\B_1, \ldots, \B_m\}$ of the set $\{1, \ldots, n\}$ with the property that if $1 \leq a < b < c < d \leq n$, with $a, c \in \B_i$ and $b, d \in \B_j$, then $\B_i = \B_j$. We call $\B_i$ a \emph{block} of $\P$ for each $1 \leq i \leq m$; see figure \ref{Homconfigncp} for an example of the bijection between Hom-configurations in $\C_1(A_n)$ and noncrossing partitions when $n = 3$. 

\begin{figure}[!ht]
\includegraphics[scale=.5]{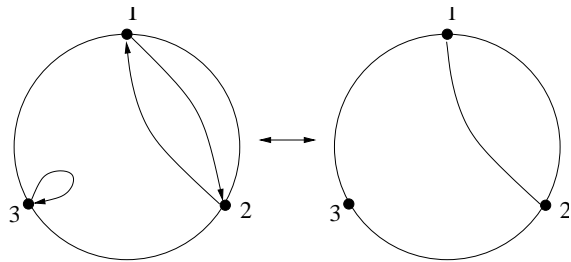}
\caption{The bijection between Hom-configurations and noncrossing partitions.}
\label{Homconfigncp}
\end{figure}

It is known that classical noncrossing partitions of $n$ are in one-to-one correspondence with noncrossing pair partitions (each block has cardinality two) of $2n$. The bijection is given by drawing around the outside of the boundaries of the convex hulls of the blocks of a noncrossing partition; see figure \ref{maprho} for an example. We denote this bijection by $\rho$. 

\begin{figure}[!ht]
\includegraphics[scale=.5]{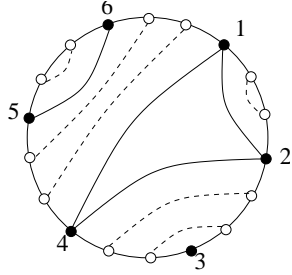}
\caption{The bijection between noncrossing partitions and noncrossing pair partitions.}
\label{maprho}
\end{figure}

Let us label the new vertices added to the $n-gon$ from $1^\prime$ to $2n^\prime$ clockwise around the circle, in such a way that $1^\prime$ is the first vertex after $1$. We note that the labelling is arbitrary, and it will depend only on the choice of the bijection between $\Gamma^\prime$ and $\Gamma (n,1)$. 

\begin{prop}
Let $\sfH$ be an Hom-configuration in $\C_1 (A_n)$. Let $\mathbb{H}_1$, respectively $\mathbb{H}_2$, be the set of vertices in $\Gamma^\prime$, respectively $\Gamma (n,1)$, corresponding to the indecomposable objects of $\sfH$. Then $\mathbb{H}_2 = \rho (\mathbb{H}_1)$. 
\end{prop}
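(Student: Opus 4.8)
The plan is to factor the statement through the explicit isomorphism of stable translation quivers recorded in Remark \ref{thetwogeometricmodels}, which I will denote $\phi \colon \Gamma' \to \Gamma(n,1)$, and then to verify that, restricted to the oriented edges that actually occur in a Hom-configuration, this vertex-by-vertex isomorphism reproduces the global operation $\rho$. First I would observe that $\mathbb{H}_1$ and $\mathbb{H}_2$ are, by definition, the images of the \emph{same} Hom-configuration $\sfH$ under the chosen bijections of $\C_1(A_n)$ with $\Gamma'$ and with $\Gamma(n,1)$, and that these two bijections are compatible with $\phi$ because both are normalised so that $S_1$ is sent to the prescribed vertex. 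Hence $\mathbb{H}_2 = \phi(\mathbb{H}_1)$, where $\phi$ is applied to each vertex of $\mathbb{H}_1$ separately, and the proposition reduces to the purely combinatorial identity $\phi(\mathbb{H}_1) = \rho(\mathbb{H}_1)$.

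Next I would make $\mathbb{H}_1$ explicit. By the bijection between Hom-configurations in $\C_1(A_n)$ and noncrossing partitions of $\{1,\ldots,n\}$ from \cite{CS1} (see also \cite{Riedtmann} and \cite{CS2}), the set $\mathbb{H}_1$ consists precisely of the oriented block-boundary edges of a noncrossing partition $\P$: for a block $\B = \{b_1 < b_2 < \cdots < b_r\}$ one takes the directed edges $[b_1,b_2], [b_2,b_3], \ldots, [b_{r-1},b_r], [b_r,b_1]$, a singleton block $\{b\}$ giving the loop $[b,b]$, with the orientation fixed by the normalisation $S_1 \mapsto 12$. Applying the formula of Remark \ref{thetwogeometricmodels}, which for $i,j \in \{1,\ldots,n\}$ reads $\phi([i,j]) = \{\,2i-1,\, 2j-2\,\}$ (representatives taken modulo $2n$), each block $\B$ therefore contributes the $2$-diagonals $\{2b_1-1,\,2b_2-2\}, \{2b_2-1,\,2b_3-2\}, \ldots, \{2b_r-1,\,2b_1-2\}$.

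Finally I would identify this collection with $\rho(\P)$. Near each original vertex $b$ the two points $2b-2$ and $2b-1$ are consecutive in the $2n$-gon and play the roles of the ``incoming'' and ``outgoing'' vertex at $b$; the diagonal $\{2b_i-1,\,2b_{i+1}-2\}$ joins the outgoing point at $b_i$ to the incoming point at $b_{i+1}$, which is exactly the segment obtained by tracing clockwise around the outside of the convex hull of $\B$. After translating the primed labelling $1',\ldots,2n'$ used to define $\rho$ into the intrinsic labelling $\{1,\ldots,2n\}$ of $\P_{n,1}$ — a correspondence fixed once and for all by the normalisation $S_1 \mapsto \{1,2\}$ — this collection is precisely the noncrossing pair partition $\rho(\P)$. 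Hence $\phi(\mathbb{H}_1)=\rho(\mathbb{H}_1)$, and the proposition follows.

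I expect the main obstacle to lie in this last bookkeeping step: reconciling the two labelling conventions (the auxiliary primed vertices used to define $\rho$ versus the intrinsic vertices of $\P_{n,1}$) and checking that the local ``incoming/outgoing'' description of the doubled vertices genuinely matches the global ``draw around the convex hull'' recipe for $\rho$ simultaneously on every block, including the degenerate case of singletons (which yield boundary edges, legitimate $2$-diagonals only because $w=-1$) and the nesting interactions between blocks of a noncrossing partition. Everything else is a direct substitution into the formula of Remark \ref{thetwogeometricmodels}.
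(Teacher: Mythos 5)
Your proposal is correct and takes essentially the same route as the paper: the paper's own proof is a one-line observation that the statement follows immediately from the definition of $\rho$ together with the quiver isomorphism of Remark \ref{thetwogeometricmodels}, which is exactly the argument you carry out. Your explicit bookkeeping --- the simplified formula $\phi([i,j])=\{2i-1,\,2j-2\}$ taken modulo $2n$, the block-boundary description of $\mathbb{H}_1$, and the block-by-block match with the ``draw around the convex hull'' recipe for $\rho$ (including singletons and the primed-labelling normalisation) --- is simply the detailed verification the paper leaves to the reader, and it checks out.
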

\begin{proof}
This follows immediately from the definition of $\rho$ and the bijection between the two different geometric models given in remark \ref{thetwogeometricmodels}.
\end{proof}

\begin{figure}[!ht]
\includegraphics[scale=.5]{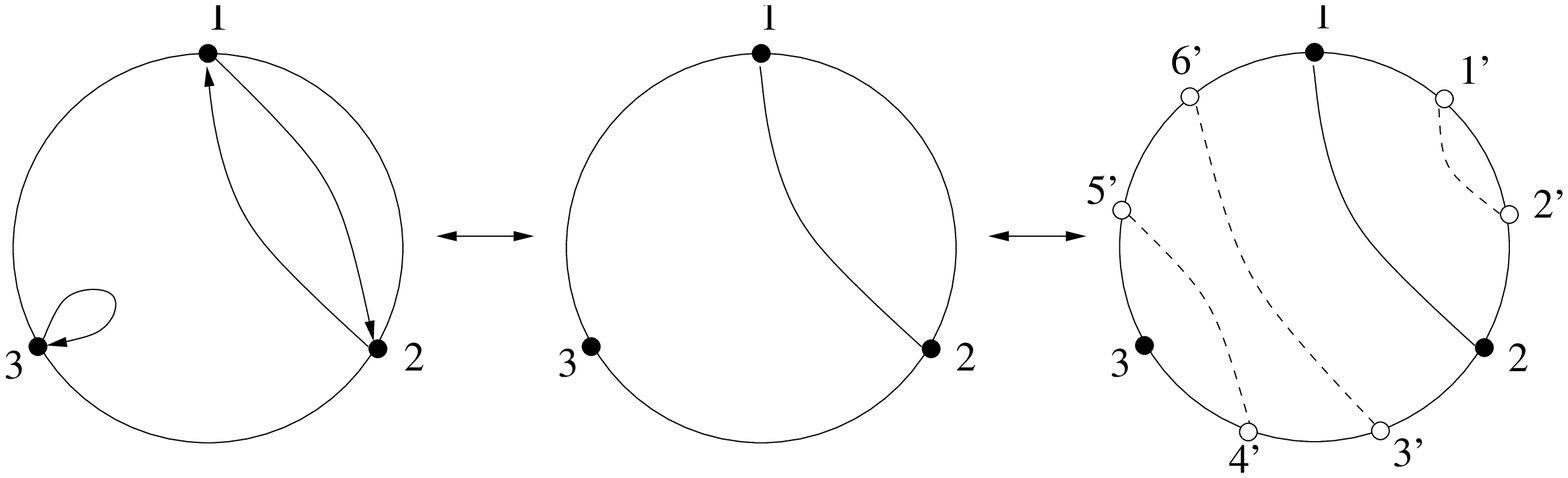}
\caption{The Hom-configuration $\{1, 23, \Sigma 2\}$ in both geometric models.}
\label{Homconfigdifmodels}
\end{figure}

\section{Noncrossing partitions of $\mathbb{Z}$ and $|-1|-\Hom$-configurations}

In this section, $\sfT$ will be $(-1)-$CY. We can easily extend the notion of classical noncrossing partition of the set $\{1, \ldots, n\}$ to ``classical noncrossing partitions of $\mathbb{Z}$'' as follows.

\begin{definition}
A \textit{noncrossing partition of $\mathbb{Z}$} is a set of pairwise disjoint non-empty subsets, called \textit{blocks}, of vertices of the $\infty$-gon, whose union is all of $\mathbb{Z}$, and such that no two blocks cross each other.
\end{definition}

We can relate $|-1|-\Hom$-configurations in $\sfT$ with noncrossing partitions of $\mathbb{Z}$, extending the bijection $\rho$ mentioned in subsection \ref{subsection:ncp}, as follows.

Let $\mathbb{Z}^\prime$ be another copy of $\mathbb{Z}$ in the $\infty$-gon, with vertices $n^\prime:=0.5 + 2n$, for each $n \in \mathbb{Z}$. Now, given an Hom-configuration $\sfH$ in $\sfT$, we define $f(\sfH)$ to be the partition $\P = \{B_1, B_2, \ldots\}$ of $\mathbb{Z}^\prime$, where the vertices in a block $B =\{a_1, a_2, \ldots\}$ are written in numerical order, i.e. $a_1 < a_2 < \ldots$, and are defined recursively as follows: let $a_i = n^\prime$, for some $n^\prime \in \mathbb{Z}^\prime$. If $n^\prime + 0.5$ is incident with an arc in $\sfH$ of the form $(m,n^\prime+0.5)$, then $a_{i+1} := m+0.5$. Otherwise, either $n^\prime + 0.5$ is incident with an arc in $\sfH$ of the form $(n^\prime+0.5,m)$ or $n^\prime+0.5$ is isolated. In these cases, we take $a_i$ to be the last element of its block. Informally speaking, this map is the \textit{infinite version} of the inverse of $\rho$. 

\begin{prop}
The map $f$ gives a one-to-one correspondence between $|-1|-\Hom$-configurations in $\sfT$ and noncrossing partitions of $\mathbb{Z}$ with at most one infinite block. 
\end{prop}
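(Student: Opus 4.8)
The plan is to first translate everything into the matching picture supplied by Theorem~\ref{Homconfigclassification}. For $w=-1$ we have $|w|=1$, so the admissible arcs are exactly the odd--length arcs (each joining one even and one odd vertex), and a $|-1|-\Hom$-configuration $\sfH$ is a maximal family of pairwise noncrossing, vertex--disjoint such arcs in which no isolated vertex has an overarc (condition~(2) with $|w|-1=0$) and at most one isolated vertex has no overarc (condition~(3)). Combining these, $\sfH$ has \emph{at most one isolated vertex}, and every other integer is an endpoint of exactly one arc: $\sfH$ is a near--perfect noncrossing matching of $\mathbb{Z}$. I would then reinterpret $f$ via the partial self--map $\phi$ of the odd integers (identified with $\mathbb{Z}'$ through $p\leftrightarrow p-0.5$): set $\phi(p)=m+1$ when $p$ is the small endpoint of its arc $(m,p)$, and leave $\phi(p)$ undefined when $p$ is a large endpoint or isolated. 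By construction the blocks of $f(\sfH)$ are precisely the orbits of $\phi$.

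Next I would establish well--definedness. Since $m>p$ we have $\phi(p)>p$, so $\phi$ has no cycles; and $\phi$ is injective because vertex--disjointness of the arcs means the even vertex $m$ determines its arc uniquely. Hence every odd vertex has at most one $\phi$--image and at most one $\phi$--preimage, so the orbits are disjoint chains partitioning $\mathbb{Z}'$, and $f(\sfH)$ is a genuine partition. For the noncrossing property I would note that each chain step $a_i\to a_{i+1}$ traces the outside of the arc $(a_{i+1}-0.5,\,a_i+0.5)$; a crossing $a<b<c<d$ of two distinct blocks would therefore exhibit two arcs of $\sfH$ that cross, contradicting condition~(1). This is the infinite analogue of the fact that $\rho$ preserves noncrossing, and the verification is a routine case analysis.

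The heart of the argument is the bound of one infinite block. I would classify an infinite $\phi$--chain as (I)~bi--infinite, (II)~unbounded above with a minimum $p_1$, or (III)~unbounded below with a maximum $a_0$, and use that the chain arcs $(a_{i+1}-1,a_i)$ tile the consecutive intervals $[a_i,a_{i+1}-1]$. A bi--infinite chain then tiles all of $\mathbb{Z}$ by \emph{maximal} arcs, which forces it to be the unique block, since any second infinite chain would have to cross it. For type~(II), the vertex $p_1-1$ cannot be a small endpoint without producing a crossing with the chain arcs, so $p_1-1$ must be \emph{the} isolated vertex; symmetrically, type~(III) forces $a_0$ itself to be the isolated vertex. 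As $\sfH$ has at most one isolated vertex, at most one bounded--infinite chain can occur, and a bi--infinite chain excludes all others; hence $f(\sfH)$ has at most one infinite block. (The examples $\sfH_1$ and $\sfH_2$ of Example~\ref{canonicalexamples} realise the bi--infinite and the type~(II) cases, with the isolated vertex of $\sfH_2$ sitting just below the minimum of its infinite block.)

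Finally I would exhibit the inverse $g$ as the infinite version of $\rho$: given a noncrossing partition $\P$ of $\mathbb{Z}$ with at most one infinite block, draw for each block the arcs around the outside of its convex hull, joining consecutive elements within a block and bridging the maximum of each block to the minimum of the next one around it. One checks that $g(\P)$ is a near--perfect noncrossing matching satisfying conditions~(1)--(3), with the hypothesis ``at most one infinite block'' used exactly to guarantee at most one isolated vertex; then $f\circ g=\mathrm{id}$ and $g\circ f=\mathrm{id}$ follow directly from the definition of $\phi$ and the drawing rule. The main obstacle is the third step: controlling the chains and the matching at $\pm\infty$ and pinning the single isolated vertex to the end of a one--sidedly infinite block. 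The remaining steps are infinite adaptations of the finite bijection $\rho$ and amount to bookkeeping.
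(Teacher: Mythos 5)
Your proposal is correct and takes essentially the same route as the paper: the crux in both is that a one-sidedly infinite block forces the vertex just inside its bounded end (your $p_1-1$ for type~(II) and $a_0$ for type~(III); the paper's $y = a_1 - 0.5$ and $x = b_1 + 0.5$) to be isolated, so two infinite blocks would contradict the fact that a $|-1|-\Hom$-configuration has at most one isolated vertex, which both arguments extract from the classification theorem. You merely fill in details the paper declares ``clear'' --- the chain/tiling bookkeeping via $\phi$, the explicitly treated bi-infinite case (where ``unique block'' should read ``unique \emph{infinite} block'', since finite blocks can nest under the chain arcs), and the explicit inverse $g$, in whose description the closing arc should bridge the maximum of each block back to the minimum of the \emph{same} block around its convex hull, as your first clause correctly indicates.
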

\begin{proof}
Let us check that this map is well defined, i.e. that the image of a Hom-configuration is indeed a noncrossing partition of $\mathbb{Z}$ with no more than one infinite block. 

Given a $|-1|-\Hom$-configuration, since $\sfH$ does not admit crossings nor arcs with a common vertex, it is clear that $f(\sfH)$ is indeed a noncrossing partition of $\mathbb{Z}$. Now, suppose for a contradiction, that there are two distinct blocks $\B_1$ and $\B_2$ which are infinite. Since $f(\sfH$) is noncrossing, these blocks must be of the form $\B_1 = \{a_1, a_2, \ldots\}$, $\B_2 = \{b_1, b_2, \ldots \}$, with $a_j > a_{j-1}, b_j < b_{j-1}$, for $j \geq 2$, and $b_1 < a_1$. Consider the vertices $x = b_1 + 0.5$ and $y = a_1-0.5$ of the $\infty-$gon. Note that, since $\B_1$ and $\B_2$ are infinite blocks, there cannot be an integer $z$ such that $z>y$ and $(z,y) \in \sfH$ or $z<x$ and $(y,z) \in \sfH$. Therefore, $y$ is either isolated or it is incident with an arc in $\sfH$ of the form $(y,z)$ with $x \leq z \leq y-1$. 

Suppose the latter holds. Then, by definition of $f$, $z-0.5$ must lie in the same block as $a_1$. But $z-0.5 < a_1$, contradicting the assumption that $a_1$ is the lower bound of $\B_1$. Therefore, $y$ must be isolated. Using a similar reasoning for the vertex $x$, and the fact that there cannot be more than one isolated vertex, we conclude that $x$ must be incident with an arc in $\sfH$ of the form $(z,x)$ with $x+1 \leq z < y$. But again, by definition of the map $f$, $z + 0.5 \in \B_2$, contradicting the assumption that $b_1$ is the upper bound of this block. Therefore, $f(\sfH)$ must have at most one infinite block. 

It is clear that $f$ is indeed a bijection.  
\end{proof}

It is natural to ask what \textit{special} property of noncrossing partitions corresponds to $|-1|-$Riedtmann configurations.

\begin{prop}
$|-1|-$Riedtmann configurations in $\sfT$ are in one-to-one correspondence with noncrossing partitions of $\mathbb{Z}$ whose infinite block, if it exists, does not have upper or lower bound.
\end{prop}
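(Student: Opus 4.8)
The strategy is to combine the previously established bijection $f$ between $|-1|-\Hom$-configurations and noncrossing partitions of $\mathbb{Z}$ (with at most one infinite block) with the classification of $|-1|-$Riedtmann configurations given by the equivalence $(1)\Leftrightarrow(3)$ of the characterisation theorem in Section 4. Recall that when $w=-1$, a $|w|-\Hom$-configuration $\sfH$ is a $|-1|-$Riedtmann configuration precisely when it has at most $|w|-1 = 0$ isolated vertices with no overarc, i.e. when $\sfH$ has \emph{no} isolated vertex lacking an overarc. Since $f$ is already known to be a bijection, the whole proposition reduces to showing that, under $f$, the condition ``$\sfH$ has no isolated vertex with no overarc'' corresponds exactly to the condition ``the infinite block of $\P = f(\sfH)$, if it exists, has neither an upper nor a lower bound''.

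First I would analyse how isolated vertices of the $\infty$-gon manifest in the partition $f(\sfH)$. Tracing the recursive definition of $f$, an isolated vertex $n'+0.5$ that has no arc starting at it forces $a_i = n'$ to be the \emph{last} element of its block; thus a block of $\P$ terminates (fails to grow upward) exactly when it reaches a vertex that is either isolated-with-no-overarc or incident with a ``backward'' arc of the form $(n'+0.5, m)$. The key observation to extract is that a block of $\P$ is \emph{bounded above} if and only if its topmost vertex corresponds to a vertex of the $\infty$-gon that is either isolated with no overarc, or lies beneath (is enclosed by) some arc of $\sfH$; and symmetrically for boundedness below. Combining this with the structure of the (at most one) infinite block, I would show that the infinite block has a finite upper bound precisely when there is an isolated vertex with no overarc lying to the ``right'' of it, and a finite lower bound precisely when there is one to the ``left''.

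The heart of the argument is then a counting/matching step. For $w=-1$, the minimum arc length is $1$, so every arc $(t,u)$ of $\sfH$ satisfies $t-u=1$; by the classification theorem there are precisely $|w|-1=0$ isolated vertices under each arc, and the remaining constraint is the global bound on isolated vertices with no overarc. I would argue that such an ``exposed'' isolated vertex is exactly what produces a finite boundary of the infinite block: if the infinite block were bounded above, the argument in the already-proven well-definedness of $f$ (the reasoning using the vertices $x=b_1+0.5$ and $y=a_1-0.5$) shows that its extremal vertex is forced to be an isolated vertex with no overarc. Conversely, an isolated vertex with no overarc either creates a second infinite block — contradicting at most one such block — or bounds the unique infinite block. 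Hence the presence of an isolated vertex with no overarc is equivalent to the infinite block possessing a finite bound, and the absence of all such vertices is equivalent to the infinite block (if present) being unbounded in both directions.

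\textbf{Main obstacle.} The delicate point, and the step I expect to require the most care, is handling the interaction between the at-most-one infinite block and the exposed isolated vertices, particularly the boundary cases. One must rule out the configuration where an isolated vertex with no overarc sits \emph{between} portions of a single infinite block without actually bounding it, and one must treat separately the cases of a block bounded on exactly one side versus both sides. The well-definedness argument for $f$ already contains the essential mechanism (that an exposed isolated vertex forces a block boundary and that two infinite blocks cannot coexist), so I would lean heavily on reusing that reasoning rather than reconstructing it; nonetheless, verifying that every finite bound of the infinite block arises from, and corresponds bijectively to, a genuine isolated-vertex-with-no-overarc is where the proof must be pinned down carefully.
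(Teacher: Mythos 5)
Your overall route coincides with the paper's where the two overlap: you reduce the statement, via the already-established bijection $f$, to the equivalence $(1)\Leftrightarrow(3)$ of the theorem in Section~4 (for $w=-1$, Riedtmann $\Leftrightarrow$ no isolated vertex with no overarc; and since condition (2) of the classification theorem gives exactly $|w|-1=0$ isolated vertices under any arc, this is simply ``no isolated vertex''), and your forward implication --- a finite bound of the infinite block forces the adjacent vertex of the $\infty$-gon to be isolated --- is exactly the paper's argument, reusing the mechanism of the vertices $x=b_1+0.5$ and $y=a_1-0.5$ from the well-definedness proof. It is worth noting that the paper's own proof consists \emph{only} of this implication (Riedtmann $\Rightarrow$ the infinite block, if any, is unbounded) and leaves the converse implicit; your insistence on establishing both inclusions is therefore a gain in completeness rather than a different method.

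That said, your converse as sketched has one soft spot. The sentence ``an isolated vertex with no overarc either creates a second infinite block \ldots or bounds the unique infinite block'' presupposes that an infinite block exists. What must actually be shown is that a non-Riedtmann Hom-configuration --- which for $w=-1$ has \emph{exactly one} exposed isolated vertex $v$, by the lemma bounding such vertices by $|w|=1$ --- cannot map to a partition whose blocks are all finite; otherwise $f(\sfH)$ would land vacuously in the stated class of partitions and the restriction of $f$ to Riedtmann configurations would fail to be onto that class. This is true but requires an argument you do not give: since $v$ has no overarc, every other vertex is an endpoint of an arc lying entirely on one side of $v$, and on (at least) one side of $v$ the noncrossing and no-common-vertex conditions force an unending chain of arcs --- the vertex adjacent to the endpoint of each arc in the chain can only be matched moving away from $v$, since any other choice would cross a previous arc, share one of its endpoints, or produce an overarc of $v$ --- and the block of $f(\sfH)$ traced by this chain is infinite and bounded precisely at the element next to $v$. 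With that chain argument inserted, your plan closes both directions and yields a correct, and indeed more complete, proof than the one printed in the paper.
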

\begin{proof}
We argue by contradiction. Let $\sfH$ be a $|-1|$-Riedtmann configuration and suppose, without loss of generality, that $f(\sfH)$ has an infinite block $\B$ with lower bound $a$. Since $\B$ is infinite, there is no $x > a$ such that $(x, a-0.5) \in \sfH$. On the other hand, if there is an arc in $\sfH$ of the form $(a-0.5, x)$, then $x < a$ and $x-0.5 \in \B$, a contradiction. Therefore, $a-0.5$ must be an isolated vertex. However, this cannot happen as $\sfH$ is a $|-1|$-Riedtmann configuration.   
\end{proof}

Note that, when we defined $f$, we could have chosen another copy of $\mathbb{Z}$, namely $\mathbb{Z}^{\prime\prime} = \{n^{\prime\prime} := 2n-0.5 \mid \, n \in \mathbb{Z}\}$. Given a noncrossing partition $\P$ of $\mathbb{Z}^\prime$, its \textit{Kreweras complement} $K(\P)$ is the unique maximal partition of $\mathbb{Z}^{\prime\prime}$ such that $\P \cup K(\P)$ is a noncrossing partition of $\mathbb{Z}^\prime \cup \mathbb{Z}^{\prime\prime}$. 

\begin{remark}
Choosing the other possible copy of $\mathbb{Z}$ just means we get the Kreweras complement of the original noncrossing partition. In other words, if $g$ is the map from the set of Hom-configurations to the set of noncrossing partitions of $\mathbb{Z}^{\prime\prime}$ defined in the same way as $f$, then we have $g = K f$.
\end{remark}

\begin{ex}
Consider again the canonical examples of Hom-configurations mentioned in example \ref{canonicalexamples}. We have $f (\sfH_1) = \{ \{\mathbb{Z}\} \}$ and $g (\sfH_2) = \{ \{n\}_{n \in \mathbb{Z}}\}$. Assume, for simplicity, that $i = 0$ in $\sfH_2$. Then $f(\sfH_2) = \{ \{\mathbb{N}_0\}, \{n\}_{n \in \mathbb{Z}_{<0}}\}$, and $g(\sfH_2) = \{ \{\mathbb{Z}_{\leq 0} \}, \{n\}_{n \in \mathbb{N}} \}$.
\end{ex}

\subsection*{Acknowledgments.}
This work was partly carried out while the author was a Riemann fellow in the Institut f\"ur Algebra, Zahlentheorie und Diskrete Mathematik (IAZD), at the Leibniz Universit\"at Hannover. The author would like to express her gratitude to the Riemann Centre and to IAZD for this opportunity and for their kind hospitality. The author would also like to thank Peter J\o rgensen for his suggestion to work with these triangulated categories, David Pauksztello for reading a preliminary version of this paper and Robert Marsh for useful discussions. She would also like to thank Funda\c{c}\~ao para a Ci\^encia e Tecnologia, for their financial support through Grant SFRH/BPD/90538/2012.

\end{document}